\documentclass[11pt, reqno]{amsart}
 
\usepackage[cp1251]{inputenc}
\usepackage{amsmath,amsthm,mathrsfs}
\usepackage{amssymb}
\usepackage[english]{babel}
\usepackage{esint}
\usepackage[titletoc]{appendix}

\usepackage{graphicx}
\usepackage{float}
\usepackage{subfigure}

\usepackage{bbm}
\usepackage{amsfonts,amstext,amssymb,verbatim,epsfig}
\usepackage{pgf,tikz}
\usepackage{float}
\usepackage{enumerate}
\usetikzlibrary{arrows}
\usepackage{hyperref}
\usepackage{cite}\setcounter{MaxMatrixCols}{30}
\usepackage{epstopdf}
\usepackage{color}
\usepackage{transparent}
\usepackage{subfigure}
\usepackage{pgfplots}
\usepgfplotslibrary{polar}
\usepgflibrary{shapes.geometric}
\usetikzlibrary{calc}

\sloppy 

\textheight=230mm \textwidth=167mm
\topmargin=-1.5cm
\oddsidemargin=-0.04cm
\evensidemargin=-0.0cm

\usepackage{tikz}

\usepackage{xcolor}
\usepackage[normalem]{ulem}

\hypersetup{colorlinks=true,pdfborder=001,  citecolor  = blue, citebordercolor= {magenta}}
\hypersetup{linkcolor=magenta, linkbordercolor = blue}
\hypersetup{colorlinks,urlcolor=blue}
\makeatletter
\let\reftagform@=\tagform@
\def\tagform@#1{\maketag@@@{(\ignorespaces\textcolor{magenta}{#1}\unskip\@@italiccorr)}}
\renewcommand{\eqref}[1]{\textup{\reftagform@{\ref{#1}}}}
\makeatother

\makeatletter

\DeclareUrlCommand\ULurl@@{%
  \def\UrlLeft{\uline\bgroup}%
  \def\UrlRight{\egroup}}
\def\ULurl@#1{\hyper@linkurl{\ULurl@@{#1}}{#1}}
\DeclareRobustCommand*\ULurl{\hyper@normalise\ULurl@}
\makeatother

\def\lessim{\ \lower4pt\hbox{$
		\buildrel{\displaystyle <}\over\sim$}\ }
\def\gessim{\ \lower4pt\hbox{$\buildrel{\displaystyle >}
		\over\sim$}\ }

\newtheorem{theorem}{\bf Theorem}
\newtheorem{lemma}[theorem]{\bf Lemma}
\newtheorem{corollary}[theorem]{\bf Corollary}

\theoremstyle{remark}

\newenvironment{Proof of lemma}{\noindent{\bf Proof of Lemma}}{\hfill$\Box$\newline}
\newenvironment{Proof of theorem}{\noindent{\bf Proof of Theorem}}{\hfill{\footnotesize${\square}$}\newline}
\newenvironment{Proof of theorems}{\noindent{\bf Proof of Theorems}}{\hfill$\Box$\newline}
\newenvironment{Proof of proposition}{\noindent{\bf Proof of Proposition}}{\hfill$\Box$\newline}
\newenvironment{Proof of propositions}{\noindent{\bf Proof of Propositions}}{\hfill$\Box$\newline}
\newenvironment{Proof of exercise}{\noindent{\it Proof of Exercise:}}{\hfill$\Box$}
\begin{document}
\title{Existence of Full Replica Symmetry Breaking for the Sherrington-Kirkpatrick Model at Low Temperature}


\author{Yuxin Zhou}
\address{Department of Statistics, University of Chicago}
\email{yuxinzhou@uchicago.edu}

\begin{abstract}
We verify the existence of full replica symmetry breaking (FRSB) for the Sherrington-Kirkpatrick (SK) model. 
{\color{black}More specifically, we prove that slightly beyond the critical temperature, the Parisi measure for the SK model is supported on an interval starting at the origin and only has one jump discontinuity at the right endpoint.}
\end{abstract}

\maketitle
\section{Introduction and main results}

The Sherrington-Kirkpatrick (SK) model is a crucial example of mean field spin glasses, leading to a wide range of problems and phenomena in both the physical and mathematical sciences. For detailed information on its background, history, and methods, we direct the reader's attention to the books by Mezard, Parisi, and Virasoro \cite{mezard1987spin}, as well as Talagrand \cite{talagrand2006parisi-b} and their extensive references.

In this paper, we investigate the structure of the functional order parameter for the Sherrington-Kirkpatrick(SK) model. This order parameter, referred to as the Parisi measure, is expected to provide a comprehensive qualitative description of the system and has been extensively studied by researchers in both physics and mathematics \cite{mezard1987spin, talagrand2006parisi-b}.  Recent discoveries have shed light on Parisi measures in \cite{AChen15PTRF,Aukosh17PTRF}, yet the structure of these measures remains elusive at low temperature. {The purpose of the present paper is to rigorously establish a key property of the Parisi measure known as ``full replica symmetry breaking'' and determine its structure when the temperature drops below a threshold.}


\subsection{{Background: The Ising spin glass model and Parisi measures}} We first  introduce the mean field Ising spin glass models. 
Let $p,N$ be integers with $p \geq 2$ and $N\geq1$. For any $N \geq 1$,  let $\Sigma_N:= \{ -1,+1 \}^N$ be the Ising spin configuration space. The Hamiltonian of the mean field Ising pure $p$-spin model  is a Gaussian function defined as
\begin{eqnarray*}
H_{N,p}(\sigma):=\frac{1}{N^{\frac{p-1}{2}}} \sum_{1 \leq i_1,\cdots,i_p \leq N} g_{i_1,\dots,i_p} \sigma_{i_1} \cdots \sigma_{i_p},
\end{eqnarray*}
for $\sigma=(\sigma_1,\cdots,\sigma_N) \in \Sigma_N,$
where all $(g_{i_1,\cdots, i_p})$, $1 \leq i_1,\cdots i_{p} \leq N$,  are independent, identically distributed standard Gaussian random variables.  

More generally, {one can} also consider the Ising mixed $p$-spin  model defined on $\Sigma_N$ whose Hamiltonian is a linear combination of the pure $p$-spin Hamiltonians 
\begin{eqnarray*}
H_N(\sigma)=\sum^\infty_{p=2} \beta_p H_{N,p}(\sigma),
\end{eqnarray*}
where $H_{N,p}$'s are assumed to be independent for different values of $p$. Here the sequence $\boldsymbol{\beta}:= (\beta_p)_{p \geq 2}$ is called the temperature parameters satisfying that $\sum^\infty_{p=2} 2^p \beta^2_p <\infty.$ 

The Gaussian field $H_{N}$ is centered with covariance given by 
\begin{eqnarray*}
\mathbb{E} H_N(\sigma^1) H_N(\sigma^2)=N \xi(R_{1,2})
\end{eqnarray*}
where  $R_{1,2}:=\frac{1}{N}\sum^N_{i=1} \sigma_i^1 \sigma^2_i$
is the normalized inner product between $\sigma^1$ and $\sigma^2$ and 
\begin{eqnarray}\label{eq:psxi}
\xi(x):= \sum_{p \geq 2} \beta^2_p x^p.
\end{eqnarray}
When $\xi(x)= \beta_2^2 x^2,$ the model introduced above is the well-known SK model, which is a mean field modification of the Edwards-Anderson model \cite{EA}.

One of the most important problems in the Ising spin glass model introduced above is to compute  the ground state energy
$$\max_{\sigma\in\Sigma_N} H_N(\sigma),$$
and 
the ground state
$${\arg\max}_{\sigma\in\Sigma_N} H_N(\sigma),$$
 as $N$ tends to infinity, which is indeed an extremely challanging task. One standard approach in statistical mechanics is to consider the Gibbs measure of $H_N$
\begin{eqnarray*}
G_{N,\beta}(\sigma)=\frac{1}{Z_{N}} \exp  H_N(\sigma)
\end{eqnarray*}
and the corresponding free energy
\begin{eqnarray*}
F_{N,\beta}=\frac{1}{ N} \log Z_{N,\beta},
\end{eqnarray*}
where $Z_{N,\beta}$ is the partition function of $H_N$ defined as
\begin{eqnarray*}
Z_{N,\beta}=\sum_{\sigma \in \Sigma_N} \exp   H_N(\sigma).
\end{eqnarray*}
 The central goal in this approach is to describe the limiting free energies $F_{N,\beta}$ and  Gibbs measures $G_{N,\beta}$ as $N$ tends to infinity at different values of $\boldsymbol{\beta}$. 

A groundbreaking  solution to the limiting free energy of the SK model was proposed by Parisi  \cite{parisi1979infinite,parisi1980infinite}, where it was predicted that the thermodynamic limit of the free energy
can be computed using a variational formula.
    This formula, known as the Parisi formula was later rigorously validated and extended to all mixed $p$-spin models by  Panchenko and Talagrand\cite{panchenko, talagrand2006parisi}. 
  To be more specific, denote the space  of all probability measures on $[0,1]$ by $M[0,1]$ and the support of $\mu \in M[0,1]$ by supp{\color{white}.}$\mu$.  For any $\boldsymbol{\beta}=(\beta_p)_{p \geq 2}$ and $\mu \in M[0,1]$, the Parisi functional  $\mathcal{P}_{\boldsymbol{\beta}}(\mu)$ is defined as  
\begin{eqnarray} \label{Parisifunctional}	
\mathcal{P}_{\boldsymbol{\beta}}(\mu)= \log 2 + \Phi_\mu (0,0)-\frac{1}{2} \int^1_0 \alpha_\mu(s) s \xi''(s) ds,
\end{eqnarray}
where $\Phi_\mu$ is the weak solution to the Parisi PDE on $ \mathbb{R} \times [0,1]$
\begin{equation}\label{ParisiPDE}  \left\{
\begin{array}{lcl}
\partial_u \Phi_\mu(x,u)=-\frac{\xi''(u)}{2} \Big[ \partial_{xx} \Phi_\mu(x,u) +\alpha_\mu(u) \big( \partial_x  \Phi_\mu(x,u)  \big)^2 \Big] .   \\
\Phi_\mu(x,1)=\log \cosh x .
\end{array} \right. \end{equation} and $\alpha_\mu$ is the distribution function of $\mu \in M[0,1].$
The Parisi formula states that the following limit exists almost surely,
\begin{eqnarray*}
\lim_{N \rightarrow \infty} \frac1N \log {\sum_{\sigma \in \Sigma_N}  \exp   H_N(\sigma)}=\inf_{\mu \in M[0,1]} \mathcal{P}_{\boldsymbol{\beta}} (\mu).
\end{eqnarray*}

As an infinite dimensional variational formula, $\mathcal{P}_{\boldsymbol{\beta}}$ is continuous and always has a minimizer. The uniqueness of the minimizer is first proven by  Auffinger and Chen\cite{{auffinger2015parisi}}. For any temperature parameter $\boldsymbol{\beta}$, the unique minimizer of $\mathcal{P}_{\boldsymbol{\beta}}$ is called the Parisi measure, denoted by $\mu_\beta$.

It is predicted that the Parisi measure is the limiting distribution of the overlap
$R(\sigma^1,\sigma^2)$ under the measure $\mathbb{E}G^{\otimes2}_N$. Moreover, Panchenko \cite{panchenko1} established the asymptotically ultrametricity assuming the validity of the extended Ghirlanda-Guerra identities \cite{GGIdentity} which are known to be valid for the SK model with an asymptotically vanishing perturbation. These two important properties of the Gibbs measures then implies that a hierarchical clustering structure is formed by   the spin configurations under the Gibbs measure, where the number of the levels are determined by the number of points in the support
of the Parisi measure.   The Parisi measure is then a crucial component in describing both the structure of the Gibbs measure and the system's free energy. For a more detailed discussion, we refer readers to \cite{panchenko1,mezard1987spin}.

To find the ground state of the SK model, Montanari \cite{Montnari1} gives an algorithm that for any $\varepsilon>0,$ outputs $\sigma_\star \in \Sigma_N$ such that $H_N(\sigma_\star)$ is at least $(1-\epsilon)$ of the ground state energy with probability converging to one as $N\rightarrow \infty$.   {\color{black}This algorithm  works and only works under the assumption (Assumption 1 of \cite{Montnari1}) that the Parisi measure is supported on an interval starting at $0$. Our main result verifies this assumption when $\beta$ is larger than and sufficiently close to $\frac{1}{\sqrt{2}}$.\footnote{\cite{Montnari1} has a slightly different normalization and the critical temperature is $\beta = 1$ there.}}



\subsection{Main Results}

The significance of the Parisi measure introduced above {naturally} motivates the   {\emph{classification problem}} of the structure of the Parisi measure $\mu_\beta$.  We say that the Parisi measure $\mu_\beta$ is replica symmetric (RS) if it's a Dirac measure; $k$ levels of replica symmetric breaking ($k$-RSB) if it consists of $k+1$ atoms; full replica symmetric
breaking (FRSB) if its support contains some interval.

As for the SK model with $\xi(x)=\beta^2x^2,$ it's predicted in the physics literature {(See \S \ref{morediscussion} below)} that the Parisi measure $\mu_\beta$ is FRSB for $\beta$ sufficiently large, which plays a crucial role in Parisi's original solution of the SK model.
Our main results below not only verify the existence of the FRSB phase  but also determine the structure of the Parisi measure  slightly beyond the high temperature regime for the SK model. In consequence, we verify that the {\color{black}algorithms} developed by Montanari {\color{black}\cite{Montnari1} and many follow-up works} can work slightly beyond the high temperature regime.
 \begin{theorem}\label{mainthm}
Suppose the SK model with $\xi(x)=\beta^2x^2$. There exists $\eta>0$ such that for any $\frac{1}{\sqrt{2}}<\beta\leq\frac{1}{\sqrt{2}}+\eta$, there exists $\upsilon_\beta\in(0,{\color{black}\eta} )$ such that supp{\color{white}.}$\mu_\beta=[0,\upsilon_\beta]$.
   \end{theorem}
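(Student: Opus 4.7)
My plan is to exploit the first-order optimality characterisation of the Parisi measure together with a bifurcation-type perturbative analysis in the small parameter $\varepsilon := \beta - \tfrac{1}{\sqrt{2}}$. Recall that $\mu_\beta$ is the unique minimiser of the strictly convex functional $\mathcal{P}_{\boldsymbol\beta}$ (Auffinger--Chen), and is therefore characterised by an inequality of the form $\Gamma_{\mu_\beta}(q) \geq 0$ on $[0,1]$, with equality on $\mathrm{supp}\,\mu_\beta$, where $\Gamma_\mu$ is the standard functional built from the Parisi PDE solution $\Phi_\mu$, essentially $\Gamma_\mu(q) = \int_0^q \xi''(s)\bigl(\mathbb{E}[(\partial_x \Phi_\mu(X_s,s))^2] - s\bigr)\, ds$, with $X_s$ the Parisi stochastic process. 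The theorem then reduces to producing, for every $\beta$ slightly above $\tfrac{1}{\sqrt{2}}$, a probability measure with support exactly $[0,\upsilon_\beta]$ for which these two conditions hold, and invoking uniqueness.

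First, I would localise the problem. For $\beta \le \tfrac{1}{\sqrt{2}}$ the measure $\mu = \delta_0$ satisfies the optimality conditions, and $\beta_c = \tfrac{1}{\sqrt{2}}$ is precisely the de Almeida--Thouless threshold at which this RS solution becomes marginal: a Taylor expansion of $\Gamma_{\delta_0}$ at $q=0$ shows that its second derivative changes sign as $\beta$ crosses $\beta_c$, so $\delta_0$ fails the optimality condition for any $\beta > \beta_c$. Combined with continuity of $\beta \mapsto \mu_\beta$ for weak convergence (a standard consequence of the continuity of $\mathcal{P}_{\boldsymbol\beta}$ in $\mu$), this both forces $0 \in \mathrm{supp}\,\mu_\beta$ and shows $\mathrm{supp}\,\mu_\beta \subset [0,\eta]$ for $\beta$ close enough to $\beta_c$.

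Next, I would construct an FRSB candidate $\mu^\star$ whose support is $[0,\upsilon_\beta]$, consisting of a continuous density on $[0,\upsilon_\beta)$ and a single atom at $\upsilon_\beta$. On the plateau, differentiating $\Gamma \equiv 0$ twice gives the saturated identity $\xi''(q)\, v_\mu(q) = 1$ (so $2\beta^2 v_\mu(q) = 1$ for SK), where $v_\mu(q)$ is a second-moment quantity read from the Parisi PDE. Together with the Parisi PDE, the left-endpoint conditions $\Gamma_\mu(0)=\Gamma_\mu'(0)=0$, the right-endpoint matching $\Gamma_\mu(\upsilon_\beta)=\Gamma_\mu'(\upsilon_\beta)=0$, and the mass constraint $\int d\mu = 1$, this produces a closed nonlinear system for the unknowns $(\alpha_\mu, \upsilon_\beta, m)$ with $m = \mu(\{\upsilon_\beta\})$. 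After rescaling by the correct powers of $\varepsilon$ (heuristically $\upsilon_\beta \asymp \varepsilon$ and the density on the plateau of order $1/\varepsilon$), I would apply the implicit function theorem around the degenerate configuration at $\varepsilon = 0$ to produce a unique small solution $\mu^\star$. Global optimality then reduces to verifying $\Gamma_{\mu^\star}(q) \ge 0$ for $q \in (\upsilon_\beta,1]$, which, since $\Gamma_{\mu^\star}$ and $\Gamma_{\mu^\star}'$ vanish at $\upsilon_\beta$, is controlled by a sign computation on $\Gamma_{\mu^\star}''$ just past $\upsilon_\beta$ (a tilted analogue of the de Almeida--Thouless computation at $q=0$), and then extended to all of $(\upsilon_\beta,1]$ by continuity and comparison with the problem at $\beta_c$.

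The main obstacle will be the simultaneous construction of the density and the endpoint in the bifurcation step. Both $\upsilon_\beta$ and the density degenerate as $\varepsilon \to 0$, so the natural linearisation of the Parisi PDE around the RS state at $\beta_c$ is genuinely singular and requires the correct rescaling to produce a nondegenerate limiting operator. A second delicate point is that $\mu \mapsto \Phi_\mu$ is only Lipschitz a priori, so applying an implicit function theorem requires identifying a Banach space of distribution functions $\alpha_\mu$ in which the map is $C^1$, together with a differentiability analysis of the Parisi PDE precise enough to handle the atom at the right endpoint.
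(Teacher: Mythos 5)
Your approach is genuinely different from the paper's: you propose to \emph{construct} an FRSB candidate $\mu^\star$ with support $[0,\upsilon_\beta]$ via a bifurcation/implicit function theorem analysis in $\varepsilon = \beta - 1/\sqrt{2}$, then verify the optimality conditions and invoke uniqueness. The paper instead never constructs the measure at all; it argues by contradiction, ruling out every way that supp$\,\mu_\beta$ could fail to be an interval starting at $0$. Concretely, after fixing a small absolute constant $\mathcal{K}$, the paper considers three cases (a ``large'' gap $(q,q')$ near $0$, a ``small'' gap near $0$, and any support point beyond $\mathcal{K}$), and for the first two it introduces the auxiliary quantity
$G_\mu(q,t) = (t-q)[\Gamma'_\mu(q)+\Gamma'_\mu(t)]/[\Gamma_\mu(t)-\Gamma_\mu(q)] - 2$,
showing it must be strictly positive at $t=q'$, which contradicts the Auffinger--Chen necessary conditions $\Gamma_{\mu_\beta}(u)=u$, $\Gamma'_{\mu_\beta}(u)\le 1$ on the support. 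The third case is handled by a Jensen-inequality argument in the spirit of \cite[Theorem 4]{AChen15PTRF}. The paper's route avoids ever solving for $\mu_\beta$, which sidesteps all the singular-perturbation and regularity difficulties you correctly anticipate.

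There are two genuine gaps in your plan. First, the claim that weak convergence $\mu_\beta \to \delta_0$ ``shows $\mathrm{supp}\,\mu_\beta \subset [0,\eta]$'' is incorrect as stated: weak convergence gives $\mu_\beta([0,\eta]) \ge 1-\varepsilon$ (the paper's Lemma \ref{continuitylem}), but it does not exclude tiny mass located far from the origin. Ruling that out is precisely the content of the paper's Case III, which is a real argument (using that any isolated support point at distance $\ge \mathcal{K}$ would need to carry too much mass). You would need such an argument too. Second, the implicit function theorem step is where the hard work lives and you have not actually carried it out. As you note, $\upsilon_\beta \to 0$ and the plateau density blows up as $\varepsilon \to 0$, so the linearisation is singular and requires a precise rescaling; the map $\mu \mapsto \Phi_\mu$ is not known to be $C^1$ in a suitable Banach space of distribution functions with an atom at a movable endpoint; and even granting a local solution $\mu^\star$, you must verify the \emph{global} optimality inequality on $(\upsilon_\beta,1]$, which is a separate nontrivial estimate. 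Until those steps are filled in with quantitative rescaled expansions (analogous in difficulty to the paper's Theorems \ref{thmFderiv} and \ref{Fm}), the proposal is a plausible programme but not a proof.
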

   Based on our main results above, the Parisi measure slightly beyond the high temperature regime then has an explicit form:
   \begin{corollary}\label{coromain}
Under the assumption of Theorem \ref{mainthm}, for any $\frac{1}{\sqrt{2}}<\beta\leq\frac{1}{\sqrt{2}}+\eta$, $\mu_\beta$ has the following form:
   \begin{eqnarray*}
   \nu_\beta+(1-m)\delta_{\upsilon_\beta}.
   \end{eqnarray*}
   Here $\nu_\beta$ is a fully supported measure on $[0,\upsilon_\beta)$ with $m:=\nu_\beta([0,\upsilon_\beta))<1$ and possesses a smooth density.
   \end{corollary}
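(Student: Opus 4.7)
The plan is to combine Theorem~\ref{mainthm} with the first-order optimality conditions for the Parisi functional to resolve the structure of $\mu_\beta$ on and inside its support. Since Theorem~\ref{mainthm} already provides $\mathrm{supp}\,\mu_\beta = [0,\upsilon_\beta]$, the two remaining tasks are (a) to prove that $\mu_\beta$ restricted to $[0, \upsilon_\beta)$ is absolutely continuous with a smooth density, and (b) to show that the atom at $\upsilon_\beta$ is genuine, i.e.\ $m < 1$.

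For step (a), I would invoke the standard self-consistency characterization of a minimizer of $\mathcal{P}_{\boldsymbol{\beta}}$: there is a nonnegative function $F(t)$ built from $\Phi_{\mu_\beta}$ and $\alpha_{\mu_\beta}$ whose zero set coincides with $\mathrm{supp}\,\mu_\beta$. Since $F \equiv 0$ on $[0, \upsilon_\beta]$, two successive differentiations yield a first-order relation expressing the density $\alpha_{\mu_\beta}'$ in terms of moments of derivatives of $\Phi_{\mu_\beta}$ along the associated Gaussian process. Standard parabolic regularity for \eqref{ParisiPDE} gives smoothness of $\Phi_{\mu_\beta}$ in $(x,t)$ whenever $\alpha_{\mu_\beta}$ is smooth and bounded, so a bootstrap argument upgrades the density to $C^\infty$ on compact subintervals of $(0, \upsilon_\beta)$; the even symmetry of the SK model (so that $\partial_x \Phi_{\mu_\beta}(0, \cdot) \equiv 0$) handles the endpoint $t = 0$.

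For step (b), I would argue by contradiction: if $\mu_\beta$ had no atom at $\upsilon_\beta$, then $\alpha_{\mu_\beta}$ would be continuous at $\upsilon_\beta$ with $\alpha_{\mu_\beta} \equiv 1$ on $[\upsilon_\beta, 1]$. One then Taylor expands $F$ to the right of $\upsilon_\beta$, using the Parisi PDE on $[\upsilon_\beta, 1]$ (which admits a Hopf--Cole linearization when $\alpha \equiv 1$, reducing $e^{\Phi_{\mu_\beta}}$ to a backward heat equation) together with the terminal condition $\Phi_{\mu_\beta}(x,1) = \log\cosh x$. The resulting leading coefficient can be computed perturbatively around $\beta = 1/\sqrt{2}$, and I expect it to be strictly negative for $\beta \in (1/\sqrt{2}, 1/\sqrt{2}+\eta]$, which contradicts $F \geq 0$. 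This forces $\mu_\beta(\{\upsilon_\beta\}) > 0$, equivalently $m < 1$. Full support of $\nu_\beta$ on $[0,\upsilon_\beta)$ is then immediate from Theorem~\ref{mainthm}, since otherwise the support could not extend all the way to $\upsilon_\beta$.

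The main obstacle will be step (b): one needs sharp control of the Parisi PDE solution at and near the endpoint $\upsilon_\beta$, together with uniform control of the sign of the leading coefficient of $F$ throughout the narrow window $\beta \in (1/\sqrt{2}, 1/\sqrt{2}+\eta]$. The small-$\eta$ hypothesis of Theorem~\ref{mainthm} is exactly what makes a tractable perturbative expansion off the critical temperature feasible, and the careful matching of the expansions of $F$ from inside and outside of $[0, \upsilon_\beta]$ at $\upsilon_\beta$ is where the real technical work lies.
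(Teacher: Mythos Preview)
Your outline is correct in spirit, but the paper's proof is far more economical: both claims are already theorems in Auffinger--Chen~\cite{AChen15PTRF}, so the paper simply cites that reference. For step~(a), Theorem~2 of \cite{AChen15PTRF} gives directly that the distribution function of $\mu_\beta$ is $C^\infty$ on any open interval contained in its support, and taking $u\to 0^+$ in equation~(21) there yields $\mu_\beta(\{0\})=0$; your differentiation-and-bootstrap sketch is essentially the argument behind that theorem, so there is no need to reprove it. For step~(b), the fact that the largest point of $\mathrm{supp}\,\mu_\beta$ is a jump discontinuity for $\beta$ slightly above $1/\sqrt{2}$ is likewise already established in~\cite{AChen15PTRF} (the paper mentions this in its survey of earlier work and invokes it in the proof), so no new perturbative expansion of your function $F$ near $\upsilon_\beta$ is required. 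Your approach would yield a self-contained proof if carried out, but the paper treats Corollary~\ref{coromain} as an immediate consequence of Theorem~\ref{mainthm} plus the cited Auffinger--Chen results, in three sentences.
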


To the best of our knowledge, the existence of the FRSB phase has not been established before in the Ising spin glass models.  The theorem above is the first result validating the existence of  FRSB  and determining the structure of the Parisi measure involving FRSB in the Ising spin glass models. It's expected that the support of the Parisi measure contains an interval for any Ising spin glasses with $\boldsymbol{\beta}$ sufficiently large. {We hope our new ingredients in the proof of Theorem \ref{mainthm} can eventually lead to the full resolution of this conjecture and related problems}.

\subsection{Earlier related works}

In this section, we survey some earlier works about the Parisi measures of the mean-field Ising spin glass models in math literature.

{For} the SK model with $\xi(x)=\beta^2 x^2,$ the Parisi measures at high temperature, i.e.  $0 < \beta \leq \frac{1}{\sqrt{2}}$  are RS proven   by Aizenman, Lebowitz and
Ruelle in \cite{ALR}. As for low temperature, Toninelli \cite{Toni} showed  that the Parisi measure is not RS for $\beta>\frac{1}{\sqrt{2}}$. Auffinger and Chen \cite{AChen15PTRF} showed that slightly above the critical temperature $\beta=\frac{1}{\sqrt{2}}$, the largest number in the support of the Parisi measure is a jump discontinuity. 
   \begin{figure}[H] 
\centering 
\includegraphics[width=0.7\textwidth]{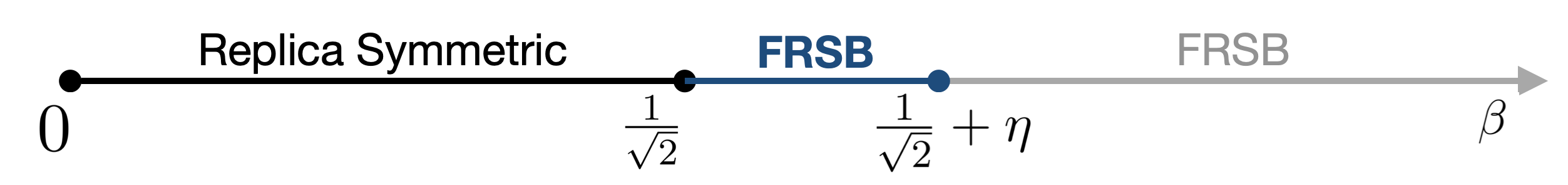} 
\caption{Phase transitions of  $\mu_{\beta}$ with respect to  $\beta$ for the SK model. The phase in black are previous results \cite{ALR} for $0<\beta\leq \frac{1}{\sqrt{2}}$. The phase in blue is our main results for $\frac{1}{\sqrt{2}}<\beta\leq\frac{1}{\sqrt{2}}+\eta$ in Theorem \ref{mainthm} and the phase in grey remains unknown.} 
\label{Fig4}
\end{figure}
Combining the previous progress above with our main results about the SK model, we have the relation between the phases of the Parisi measure $\mu_\beta$ and the temperature $\beta$, which is illustrated in Figure \ref{Fig4}. The phase in black represents  that the Parisi measure $\mu_\beta$ is RS for  $0< \beta \leq \frac{1}{\sqrt{2}}$  \cite{ALR}.  The phase in blue represents  our main results that the Parisi measure is FRSB for  $\frac{1}{\sqrt{2}}<\beta\leq\frac{1}{\sqrt{2}}+\eta$. The phase in grey is  conjectured  to be FRSB as well for $\beta> \frac{1}{\sqrt{2}}+\eta$.

 For the pure $p$-spin models with $p \geq 3,$ it was proven by Chen, Handschy and Lerman  in \cite{Chen,ChenHL} that the Parisi measure remains RS at high temperature and leave the RS phase when the temperature decreases. Recently, the author \cite{Zhou} verified the existence of 1RSB and proved that the Parisi measure is 1RSB slightly beyond the high temperature regime. 
 
 As for the mixed $p$-spin models, it was shown by Auffinger and Chen \cite{AChen15PTRF}  that the support of the Parisi measures contains the origin at all temperatures. If the support contains an open interval, then the Parisi measure has a smooth density on this interval. They also gave a criterion on temperature parameters for the Parisi measures to be neither RS nor 1RSB. Moreover, it was shown by Auffinger, Chen and Zeng \cite{auffinger2017sk} that the support of the Parisi measure contains infinitely many points at zero temperature.


\subsection{More discussion about FRSB phase of the SK model}\label{morediscussion}

In order to introduce more details about our main results,
we {first} introduce some predictions about the SK model in physics literature. {See \cite{BinderYoung} for a good review of the earlier history.} 

For the SK model with $\xi(x)= \beta^2 x^2$, the Parisi measure is expected to be FRSB of the form mentioned in Corollary \ref{coromain}
\begin{eqnarray}\label{eqfrsb}
\mu_\beta=\nu_\beta+(1-m) \delta_{\upsilon_\beta},
\end{eqnarray}
for any $\beta> \frac{1}{\sqrt{2}}$.
Here $\nu_\beta$ is a fully supported measure on $[0,\upsilon_\beta)$ with $m= \nu_\beta([0,\upsilon_\beta))<1$ and has a smooth density. {As mentioned before, prior to Theorem \ref{mainthm}, this FRSB phenomenon was not rigorously established for any $\beta>\frac{1}{\sqrt{2}}$, indeed nor for any Ising spin glass model. Next we explain some consequences of Theorem \ref{mainthm}.} 
For $0<\beta \leq \frac{1}{\sqrt{2}}+\eta$, the distribution of the Parisi measure $\mu_\beta$ is illustrated in Figure \ref{Fig3}. The figure on the left illustrates that $\mu_\beta$ is RS for $0< \beta \leq \frac{1}{\sqrt{2}}$  \cite{ALR}. The figure on the right represents our main results {(Theorem \ref{mainthm} and Corollary \ref{coromain})} for $\mu_\beta$ when $\frac{1}{\sqrt{2}}<\beta\leq \frac{1}{\sqrt{2}}+\eta.$ To be more specific,   the blue curve represents   that  the support of $\mu_\beta$ consist of $[0,\upsilon_\beta]$ for some $\upsilon_\beta\in (0,{\color{black}\eta})$ and the density of $\mu_\beta$ on $[0,\upsilon_\beta)$ is smooth.  The blue line represents that the right endpoint  of $[0,\upsilon_\beta]$ is the only jump discontinuity in  supp{\color{white}.}$\mu_\beta$. 

 \begin{figure}[H] 
\centering 
\includegraphics[width=1\textwidth]{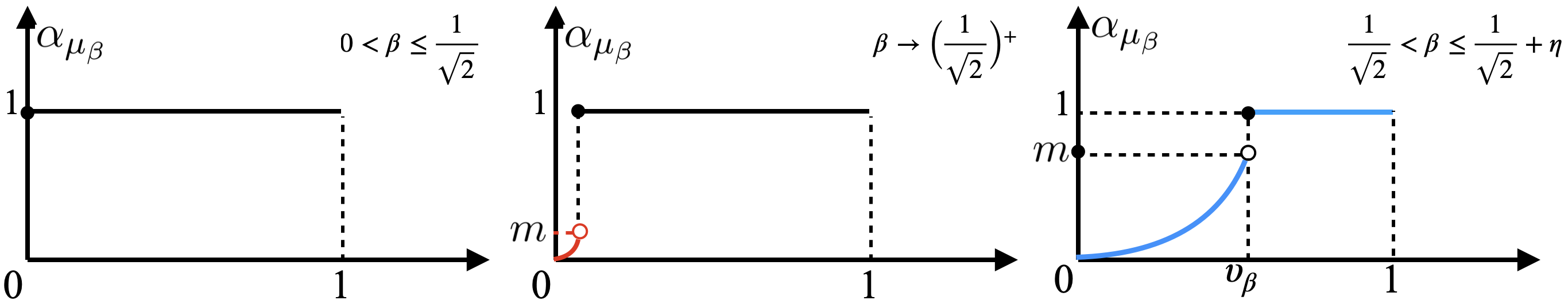} 
\caption{Distributions $\alpha_{\mu_\beta}$ of  Parisi measures $\mu_\beta$ for the SK model. The  figures from left to right are $\alpha_{\mu_\beta}$ for $0< \beta \leq \frac{1}{\sqrt{2}}$, $\beta \rightarrow \big( \frac{1}{\sqrt{2}}\big)^+$ and $\frac{1}{\sqrt{2}}< \beta \leq \frac{1}{\sqrt{2}}+ \eta$, respectively.} 
\label{Fig3}
\end{figure}
 
{One can intuitively understand} the phase transition at the critical temperature $\beta=\frac{1}{\sqrt{2}}$ {as follows:} When $0 < \beta \leq \frac{1}{\sqrt{2}}$, the Parisi measure $\mu_\beta$ is RS and  then on the verge of transitioning from RS to FRSB phase later at  $\beta=\frac{1}{\sqrt{2}}$. Indeed when $\beta=\frac{1}{\sqrt{2}}$, $\mu_\beta$ can be regarded as the threshold of RS  and FRSB as follows:  
   \begin{eqnarray*}
 \mu_\beta&=&\delta_0 \text{ (RS), }\\
 &=&\nu_\beta+(1-m)\delta_{q}, \text{ where } m=q=0 \text{ in \eqref{eqfrsb}} \text{ (FRSB). }
 \end{eqnarray*}
 Here the first equality is the usual way to regard $\mu_\beta$ as replica symmetric while the second equality is the way to regard it as a degenerate case of FRSB. As presented by the middle figure in Figure \ref{Fig3}, the red line represents that the support of $\mu_\beta$ is about to contain an interval near the origin at the critical temperature $\beta=\frac{1}{\sqrt{2}}$.

\subsection{{Proof ideas}} \label{proofidea}{In this subsection, we discuss some key new ideas in our verification of the FRSB. We start by recalling a useful criterion proved by Auffinger and Chen (Theorem \ref{thmcriterion} below) on whether a probability measure $\mu$ is the Parisi measure $\mu_{\beta}$. Starting from any $\mu$, one can construct an auxiliary function commonly denoted as $\Gamma_{\mu}$ such that:  $\Gamma_{\mu_{\beta}} (u) = u$ and $\Gamma_{\mu_{\beta}}' (u) \leq 1$ for $u \in$ supp{\color{white}.}$\mu_{\beta}$.

Suppose $\beta$ is close to $\frac{1}{\sqrt{2}}$. Our first new input to characterize the FRSB property is an elementary analysis fact that did not seem to be used on this characterization problem before. {\color{black}We will fix an absolute constant $\mathcal{K}>0$.} It is well-known that $\mu_{\beta} \neq \delta_0$ when $\beta > \frac{1}{\sqrt{2}}$ {\color{black} \cite{Toni} and that $0$ is in the support of $\mu_{\beta}$}  \cite{AChen15PTRF}}. {\color{black}We claim that Theorem \ref{mainthm} follows if we can show that none of the following three cases holds} ($A, B>0$ are absolute constants in Theorem \ref{thmFderiv} below):
\begin{enumerate}[(I)]
\item  {\color{black} A large interval near $0$ with its endpoints contained in supp{\color{white} .}$\mu_{\beta}$    is missing from supp{\color{white} .}$\mu_{\beta}$, i.e. $(q,q') \cap$ supp{\color{white} .}$\mu_{\beta}=\emptyset$,   $q\leq\mathcal{K}$, $q'-q>\frac{A}{3B}$ and $q,q' \in$ supp{\color{white} .}$\mu_{\beta}$.}
\item {\color{black} A small interval near $0$ with its endpoints contained in supp{\color{white} .}$\mu_{\beta}$    is missing from supp{\color{white} .}$\mu_{\beta}$, i.e. $(q,q') \cap$ supp{\color{white} .}$\mu_{\beta}=\emptyset$,   $q\leq\mathcal{K}$, $q'-q\leq\frac{A}{3B}$ and $q,q'\in$ supp{\color{white} .}$\mu_{\beta}$.} 
\item {\color{black} Some point away from $0$ is in supp{\color{white} .}$\mu_{\beta}$, i.e. $[\mathcal{K},1]\cap$ supp{\color{white} .}$\mu_{\beta}\not=\emptyset$.}
\end{enumerate}

{\color{black}Indeed, note that supp{\color{white} .}$\mu_{\beta}$ contains $0$ but is not equal to $\{0\}$. To see that ruling out all the three cases gives Theorem \ref{mainthm}, suppose to the contrary that supp{\color{white} .}$\mu_{\beta}$ is not an interval starting at $0$. Then it must miss an interval $(q, q')$ but contains $q, q'$. If $q\leq \mathcal{K}$, then Cases I or II must occur. Otherwise, Case III  occurs. We will show in Subsection \ref{CaseIII} that Case III cannot hold for $\beta$ close to $\frac{1}{\sqrt{2}}$. Thus if we rule out all the three cases, then supp{\color{white} .}$\mu_{\beta}$ must be  an interval starting at $0$. Moreover the length of this interval is small because Case III never happens.} 

There are two major well-known difficulties that prevented many efforts trying to use results like Theorem \ref{thmcriterion} to characterize the symmetric breaking structure of $\mu_{\beta}$. 

{\color{black}\underline{Difficulty 1.}} Even though $\Gamma_{\mu_\beta}$ is defined by a formula, it is not explicitly computable as the definition involves solving a nonlinear PDE, known as Parisi PDE, and a random process. Thus,  one must look for a {\color{black}useful} estimate  instead. As we shall see, many {\color{black}estimates} concerning  related functions to the Ising spin glasses are far from trivial and sometimes unusually tight. As a consequence, careless {\color{black}estimates} almost never work. Interested readers can also see \cite{Zhou}, where this difficulty already presents itself.

{\color{black}\underline{Difficulty 2.}} In order to establish the qualitative FRSB property, we need to rule out all possibilities for $\mu_{\beta}$ being in Cases I, II or III. That is an uncountably infinite dimensional set to rule out: we cannot parameterize all measures in Cases I, II or III with countably many parameters. It is challenging to prove none of them can be $\mu_{\beta}$ at the same time and perhaps a reason why no rigorous FRSB results existed in the study of this model before.

Next we explain how both difficulties are overcome in the present paper. {\color{black}As above, Difficulty 2 makes the FRSB characterization problem resist all attacks up to date. To possibly deal with it, we need to find something in common for the infinitely dimensional space of non-FRSB measures that disqualifies them of being the Parisi measure all at once.} We follow an important principle in our prior work \cite{Zhou} on Ising pure $p$-spin glasses ($p\geq3$), which is in turn inspired by our prior works \cite{AZhou1,Zhou2}. In \cite{Zhou}, a phase transition from RS to 1RSB near a critical temperature is established by considering an auxiliary function related to $\Gamma_{\mu}$ and proving it is convex. {\color{black}Next we will see} that another auxiliary function based on $\Gamma_{\mu}$ is crucial in our approach. It will have provable nice properties to rule out Cases I and II. Once we rule out the two cases, we can then rule out Case III by a subtle observation, which will be introduced briefly at the end of this subsection.


It turns out the function 
$$G_{\mu_\beta} (q,t) = \frac{(t-q)\cdot [\Gamma_{\mu_{\beta}}' (q) + \Gamma_{\mu_{\beta}}' (t)]}{\Gamma_{\mu_{\beta}} (t) - \Gamma_{\mu_{\beta}} (q)} -2, q<t\leq q'$$ 
 is the one that can help us rule out Cases I and II. The motivation of this construction comes from the structure of the Parisi PDE, and it provides some unique advantages in the proof of the main theorem: On one hand, by the benchmark Theorem \ref{thmcriterion}, it holds that $G_{\mu_\beta}(q,q')\leq0$. On the other hand, we will prove that if supp{\color{white}.}$\mu_{\beta}$ leaves a small gap $(q,q')$ with $q,q'$  sufficiently close to {\color{black}$0$} (satisfied in Case II), or if it leaves a large gap $(q, q')$  but with {\color{black}$q$ close to $0$ and} $\mu_{\beta} (\{0\})$ close to $1$ (satisfied in Case I), then $G_{\mu_\beta}(q,\cdot)$  has a strictly positive limit at the right endpoint. This leads to a contradiction. We thus have the intuition that if $\mu_{\beta}$ leaves gaps near the origin and is not FRSB around it, then the gap will violate the non-positivity requirement of $G_{\mu_\beta}(q,q')$  granted by Auffinger-Chen's Theorem \ref{thmcriterion} and thus all such gaps  must be closed in the Parisi measure.

Next we explain in details of the proof of the strict positivity of $G_{\mu_\beta}(q,q')$. This property is stated and proved in Theorems \ref{thmFderiv} and \ref{Fm}. In our proofs, the assumption $\beta$ close to $\frac{1}{\sqrt{2}}$ is only used to ensure the Parisi measure $\mu_{\beta}$ is close to $\delta_0$ (in the sense of Lemma \ref{continuitylem} below).

First suppose we are in Case II so that $q$ {\color{black} is close to $0$ and the gap is small, i.e. $q \in [0,\mathcal{K}]$} and $q'-q\leq\frac{A}{3B}$. We will prove that (Theorem \ref{thmFderiv}) $G_{\mu_\beta}(q,t)$ in fact strictly increases on $(q, q']$. This is done by first deriving from the Parisi PDE that $G_{\mu_\beta}(q,q^+ ) =\partial_t G_{\mu_\beta} (q,q^+ ) =0$, and then derive {\color{black} that ${\partial_t^2}G_{\mu_\beta}(q,q^+)$ is larger than an absolute positive constant}  using the facts that ${\partial_t^2}G_{\delta_0}(0,0^+)>0$  and $G_{\mu_\beta}(q,\cdot)$ is sufficiently close to $G_{\delta_0}(0,\cdot)$ by definition. Here $G_{\delta_0}$ denotes the function defined in the same way as $G_{\mu_\beta}$ with all $\mu_{\beta}$ replaced by $\delta_0$. It is interesting to comment that our computation seems to show $\partial_t^2G_{\delta_0} (0,0^+) > 0$ because it is a sum of squares of engaging terms such as a fourth derivative of the Parisi PDE solution (see \eqref{square}). We expect this observation to be interesting in its own right and to see more applications in more FRSB characterizations.

Now suppose we are in Case I so that $(q, q')$ is {\color{black}a long} gap left out by supp{\color{white}.}$\mu_{\beta}$ {\color{black}   and its left endpoint is close to $0$, i.e. $q\in[0,\mathcal{K}]$ and $q'-q>\frac{A}{3B}$}. 
In this case we can again approximate $G_{\mu_\beta}(q, q')$ by $G_{\delta_0} (0,q')$ and it suffices to prove $G_{\delta_0}(0,\cdot)$ is strictly positive on the whole $(0, 1]$. We will in fact prove $G_{\delta_0}(0,\cdot)$ is strictly increasing on $[0, 1]$ (Theorem \ref{Fm}), which can be evidently seen from numerical approximations and is how we discovered it is useful. But when it comes to a rigorous proof, we immediately have to face the above Difficulty 1: ${\partial_t}G_{\delta_0}(0,t)$ cannot be accurately computed as it involves expectations of Gaussian variables, but we need to bound its value everywhere{\color{black}, including at numbers faraway from $0$}. Moreover,  as with many inequalities of this model, numerics suggests this bound is very strong and special cautions need to be taken to prove it.

We prove Theorem \ref{Fm} by following a ``reverse Gaussian integration by parts'' strategy we developed in \cite{Zhou}. ${\partial_t}G_{\delta_0}(0,t)$ has the same sign as some complicated polynomial of expectations of various expressions of {\color{black}$2\beta^2 t$ and} a Gaussian random variable. It is far from linear or explicitly computable and may seem hard to control. {\color{black}A key move that was also previously used in \cite{Zhou} is to use Gaussian integration by parts in the unusual direction. By doing this and again relying on some structure of the Parisi PDE, we are able to remove the $2\beta^2 t$ terms and pin down a few deterministic linear inequalities that implies ${\partial_t}G_{\delta_0}(0,t)>0$ (\eqref{GIP1}, \eqref{GIP2} and \eqref{GIP3}). The readers will see they are correct but still surprisingly strong. 
We handle this difficulty and rigorously prove these by} 
a very strong inequality for the inverse trigonometric functions function in the literature \cite{arcsinh}. {\color{black} We anticipate this reverse GIBP technique to see more uses in the study of Parisi measures.}

Finally we turn to Case III. Since  $\mu_{\beta}=\delta_0$ when $\beta=\frac{1}{\sqrt{2}},$ the mass of $\mu_\beta$ then concentrates near the origin when $\beta$ is close to $\frac{1}{\sqrt{2}}$, i.e. $\mu_\beta([0,\mathcal{K})) \geq 1-\mathcal{K}$. However, inspired by {\color{black}a result of Auffinger-Chen \cite[Theorem 4]{AChen15PTRF}}, we show that if there is some point far away from the origin contained in supp{\color{white}.}$\mu_\beta$, i.e. $q \in [\mathcal{K},1]$ and $q \in$ supp{\color{white}.}$\mu_\beta$, more mass than the remaining will be needed at $q$, i.e. $\mu_\beta( \{q\})>\mathcal{K}$, which leads to a contradiction.


It is worth reiterating that the only advantage we take by working near the critical temperature $\beta = \frac{1}{\sqrt{2}}$ is that we can assume $\mu_{\beta}$ is near $\delta_0$, which allows some convenience in the final computational problems we pin down (\eqref{square}, \eqref{GIP1}, \eqref{GIP2} and \eqref{GIP3}). For general $\beta$, we hope some of our tools will stay useful to verify the FRSB property.



{\section*{Acknowledgements} The author thanks Song Mei \cite{SongMei} for showing her some numerical simulations of the Parisi measure at various low temperatures.}

\section{Proof outline of Theorem \ref{mainthm}}\label{section4}

From now on, in order to simplify our notations,  we only consider the SK model with $\xi(x)=\beta^2 x^2,$ for $\beta>0$.

\subsection{Properties of Parisi Measures}

In order to prove our main results in Theorem \ref{mainthm}, 
 we now consider probability measures on $[0, 1]$ whose support contains atoms. Assume  $\mu \in M[0,1]$ has two atoms at $q_p$ and $q_{p+1}$ with $(q_p,q_{p+1})\notin$ supp{\color{white}.}$\mu_\beta$ and $\mu([0,q_p])=m_p$. We can then solve the Parisi PDE \eqref{ParisiPDE} explicitly by the Cole-Hopf transformation. To be more specific, 
for $q_p \leq u <q_{p+1}$,
\begin{eqnarray*}
\Phi_\mu(x,u)= \frac{1}{m_p} \log \mathbb{E} \exp m_p \Phi_\mu(x+g \sqrt{\xi'(q_{p+1})-\xi'(u)} ,q_{p+1}),
\end{eqnarray*}
where $g$ is a standard Gaussian random variable.

Now in order to prove our main results, we will need a criterion to characterize the structure of the Parisi measure $\mu_{\beta}.$ Let  $B=(B(t))_{t \geq 0}$ be a standard Brownian motion and consider the time changed Brownian motion $M(u)=B(\xi'(u))$ for $u \in [0,1].$ For any $\mu \in M[0,1],$ we first define
\begin{eqnarray*}
W_\mu(u)=\int^u_0 \big( \Phi_\mu( M(u)  ,u) - \Phi_\mu(  M(s)   ,s) \big)d \mu(s), 
\end{eqnarray*}
and then
\begin{eqnarray*}
\Gamma_\mu(u)=\mathbb{E} \big( \partial_x \Phi_\mu(  M(u) ,u) \big)^2 \exp W_\mu(u),
\end{eqnarray*}
for $u \in [0,1].$
Auffinger and Chen \cite{AChen15PTRF} proved the following necessary criterion for $\mu \in M[0,1]$ to be the Parisi measure:
\begin{theorem}[Proposition 3, Theorem 5 in \cite{AChen15PTRF}] \label{thmcriterion}
For any $\mu \in M[0,1],$ $\Gamma_\mu(u)$ is differentiable  and $\Gamma'_\mu(u)$ is continuous with respect to $u$, with
\begin{eqnarray*}
\Gamma'_\mu(u)=2\beta^2 \mathbb{E}\big[ \big( \partial^2_{x} \Phi_\mu(M(u),u) \big)^2 \exp W_\mu(u) \big].
\end{eqnarray*}
Moreover, if $\mu_\beta$ is the Parisi measure, then $\Gamma_{\mu_\beta}(u)=u$ and $\Gamma'_{\mu_\beta}(u) \leq 1$ for all $u \in \text{supp{\color{white}.}}\mu_\beta.$
\end{theorem}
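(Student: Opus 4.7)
I would split the statement into two independent claims: (a) the differentiation formula for $\Gamma_\mu'$, valid for every $\mu\in M[0,1]$; and (b) the optimality conditions for the Parisi measure.

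The plan for (a) is a direct stochastic-calculus computation along $M(u)=B(\xi'(u))$, which satisfies $d\langle M\rangle_u=\xi''(u)\,du$. First, Itô's formula applied to $\Phi_\mu(M(u),u)$ combined with the Parisi PDE \eqref{ParisiPDE} collapses the drift to
\[
d\Phi_\mu(M(u),u)=-\tfrac{\xi''(u)}{2}\alpha_\mu(u)\bigl(\partial_x\Phi_\mu\bigr)^2\,du+\partial_x\Phi_\mu\,dM(u).
\]
Rewriting $W_\mu(u)=\alpha_\mu(u)\Phi_\mu(M(u),u)-\int_0^u\Phi_\mu(M(s),s)\,d\mu(s)$ yields $dW_\mu(u)=\alpha_\mu(u)\,d\Phi_\mu(M(u),u)$, and a key cancellation occurs: the PDE drift in $dW_\mu$ is exactly balanced by the Itô correction $\tfrac12\,d\langle W_\mu\rangle_u$, so that $Y_u:=\exp W_\mu(u)$ is a local martingale with $dY_u=Y_u\alpha_\mu(u)\partial_x\Phi_\mu\,dM(u)$. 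Next, set $Z_u=(\partial_x\Phi_\mu(M(u),u))^2$, expand $dZ_u$ using Itô together with the $x$-derivative of the Parisi PDE, and apply the product rule $d(Z_uY_u)=Z_u\,dY_u+Y_u\,dZ_u+d\langle Z,Y\rangle_u$. A second cancellation, between $d\langle Z,Y\rangle_u$ and the only surviving drift in $Y_u\,dZ_u$, leaves
\[
d(Z_uY_u)=(\text{martingale})+Y_u\,\xi''(u)(\partial_{xx}\Phi_\mu)^2\,du.
\]
Taking expectations and using $\xi''(u)=2\beta^2$ for the SK model produces the claimed formula for $\Gamma_\mu'(u)$; continuity in $u$ is then a standard dominated-convergence consequence.

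For (b), I would exploit the convexity of $\mathcal{P}_\beta$ (Panchenko; strict convexity from \cite{auffinger2015parisi}) and first-variation calculus. For $\nu\in M[0,1]$, write $\mu_\varepsilon=(1-\varepsilon)\mu_\beta+\varepsilon\nu$ and compute $\partial_\varepsilon \mathcal{P}_\beta(\mu_\varepsilon)|_{\varepsilon=0}$. The integral term of the Parisi functional is linear in $\mu$ and, after a Fubini, contributes an explicit linear functional of $\nu-\mu_\beta$; the term $\Phi_{\mu_\varepsilon}(0,0)$ is handled by a variant of the martingale computation in (a), using $\mathbb{E}[Y_u]\equiv1$, which yields an integral representation of the directional derivative whose integrand is $\tfrac12\xi''(u)\bigl(u-\Gamma_{\mu_\beta}(u)\bigr)$ up to an additive constant. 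Non-negativity of this variation against every probability $\nu$, combined with the known fact that $0\in\operatorname{supp}\mu_\beta$ with $\Gamma_{\mu_\beta}(0)=0$, pins the constant to zero and forces $\Gamma_{\mu_\beta}(u)\leq u$ on $[0,1]$ with equality $\mu_\beta$-almost everywhere, hence on $\operatorname{supp}\mu_\beta$ by continuity of $\Gamma_{\mu_\beta}$. Finally, since $h(u):=\Gamma_{\mu_\beta}(u)-u$ is a continuous non-positive function vanishing at every support point, its right derivative there cannot exceed $0$, i.e.\ $\Gamma_{\mu_\beta}'(u)\leq 1$, as required.

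The main obstacle is regularity. For a general $\mu\in M[0,1]$ the Parisi PDE admits only a weak solution, so the Itô manipulations above require justification. The natural remedy is to prove everything first for $\mu$ with finitely many atoms, in which case the Cole--Hopf transformation displayed in the excerpt makes $\Phi_\mu$ genuinely smooth and all the stochastic computations (and the two miraculous cancellations) unambiguous, and then pass to the limit using the Lipschitz continuity of $\mu\mapsto\Phi_\mu$ in an appropriate Wasserstein-type metric. One also needs the uniform bounds $|\partial_x\Phi_\mu|\leq 1$ and $0\leq \partial_{xx}\Phi_\mu\leq 1$, which follow from the terminal condition $\Phi_\mu(x,1)=\log\cosh x$ together with a maximum-principle argument applied to the PDEs satisfied by $\partial_x\Phi_\mu$ and $\partial_{xx}\Phi_\mu$; these bounds are what make the dominated-convergence arguments work in both the expectation of the Itô identity and the first-variation step.
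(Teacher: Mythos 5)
This result is stated in the paper with the explicit header ``[Proposition 3, Theorem 5 in \cite{AChen15PTRF}]'' and is used purely as an imported black box; the paper gives no proof of its own, so there is no in-paper argument for me to compare your sketch against, and I will instead assess it against the argument in the cited source.

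Your part (a) -- showing $\exp W_\mu(u)$ is a martingale from the Parisi PDE, applying It\^o's formula to $(\partial_x\Phi_\mu)^2\exp W_\mu$ with the noted drift cancellations, and handling regularity by first treating $\mu$ with finitely many atoms via Cole--Hopf and then passing to the limit under the uniform bounds $|\partial_x\Phi_\mu|\le 1$, $0\le\partial^2_x\Phi_\mu\le 1$ -- is the approach of the Auffinger--Chen source and looks sound.

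Part (b) has a genuine gap. The first variation at the minimizer reads
$\tfrac12\int_0^1\xi''(s)\bigl(\Gamma_{\mu_\beta}(s)-s\bigr)\bigl(\alpha_\nu(s)-\alpha_{\mu_\beta}(s)\bigr)\,ds\ge 0$
for every $\nu\in M[0,1]$, which after Fubini becomes $\int\Psi\,d\nu\ge\int\Psi\,d\mu_\beta$ with $\Psi(q):=\int_q^1\xi''(s)\bigl(\Gamma_{\mu_\beta}(s)-s\bigr)\,ds$. This says $\mu_\beta$ is carried by the minimizers of $\Psi$ -- an integral, potential-theoretic optimality condition -- and it does \emph{not} yield the pointwise inequality $\Gamma_{\mu_\beta}(u)\le u$ on $[0,1]$ that you assert: the test ``functions'' $\alpha_\nu-\alpha_{\mu_\beta}$ are differences of CDFs, not arbitrary sign patterns, so you cannot localize the integrand. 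Moreover, if the pointwise inequality were true, then $h(u)=\Gamma_{\mu_\beta}(u)-u$ would be nonpositive, differentiable (by part (a)), and vanishing at each support point, so at an interior support point both one-sided derivatives of $h$ would vanish, forcing $\Gamma'_{\mu_\beta}\equiv 1$ on the whole support -- a stronger (and incorrect in general) conclusion than the $\le 1$ the theorem claims, which is a tell that the premise is wrong. The correct route is to work with $\Psi$: for $u$ an interior point of $\operatorname{supp}\mu_\beta$, $\Psi$ attains its global minimum at $u$, so $\Psi'(u)=-\xi''(u)\bigl(\Gamma_{\mu_\beta}(u)-u\bigr)=0$ gives $\Gamma_{\mu_\beta}(u)=u$, and then $\Psi''(u)\ge 0$ together with $\Gamma_{\mu_\beta}(u)=u$ gives $-\xi''(u)\bigl(\Gamma'_{\mu_\beta}(u)-1\bigr)\ge 0$, i.e.\ $\Gamma'_{\mu_\beta}(u)\le 1$ (boundary support points are handled by the appropriate one-sided conditions). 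Finally, you invoke ``the known fact that $0\in\operatorname{supp}\mu_\beta$'' as an ingredient; that is itself a separate nontrivial theorem of Auffinger--Chen and is not needed to derive the optimality conditions on $\operatorname{supp}\mu_\beta$.
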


We will also use the following stability fact of the Parisi measure for $\beta$ near the critical temperature $\frac{1}{\sqrt{2}}$.

\begin{lemma}\label{continuitylem}
    For every $\varepsilon > 0$, there exists $\omega>0$ such that for $|\beta - \frac{1}{\sqrt{2}}|\leq\omega$, $\mu_{\beta}([0, \varepsilon])\geq 1-\varepsilon$. 
\end{lemma}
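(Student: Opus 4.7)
The plan is to deduce this stability statement from a weak-convergence argument, which requires only two classical ingredients: the known fact $\mu_{1/\sqrt{2}} = \delta_0$ (from the RS phase $\mu_\beta=\delta_0$ for $0<\beta\leq\frac{1}{\sqrt{2}}$ proved in \cite{ALR}), and the uniqueness of the Parisi minimizer from \cite{auffinger2015parisi}. First I would prove that $\mu_\beta\rightharpoonup\delta_0$ weakly as $\beta\to \frac{1}{\sqrt{2}}$, and then translate this into the quantitative bound $\mu_\beta([0,\varepsilon])\geq 1-\varepsilon$ via a continuous cut-off test function.

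For the weak-convergence step, I would use a standard compactness-plus-uniqueness argument. Take any sequence $\beta_n \to \frac{1}{\sqrt{2}}$. Since $M[0,1]$ is compact under weak convergence, every subsequence of $(\mu_{\beta_n})$ admits a further weakly convergent subsequence $\mu_{\beta_{n_k}}\rightharpoonup\mu_*$. The Parisi functional $(\beta,\mu)\mapsto \mathcal{P}_\beta(\mu)$ is jointly continuous with respect to weak topology on $\mu$: this follows from Lipschitz stability of the Parisi PDE solution $\Phi_\mu$ in $\alpha_\mu$ (in $L^1$, say) and continuous dependence in $\xi=\beta^2x^2$, both of which are well-documented. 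Consequently, for any $\nu\in M[0,1]$,
\begin{eqnarray*}
\mathcal{P}_{1/\sqrt{2}}(\mu_*) = \lim_k \mathcal{P}_{\beta_{n_k}}(\mu_{\beta_{n_k}}) \leq \lim_k \mathcal{P}_{\beta_{n_k}}(\nu) = \mathcal{P}_{1/\sqrt{2}}(\nu),
\end{eqnarray*}
so $\mu_*$ is a minimizer of $\mathcal{P}_{1/\sqrt{2}}$. By the uniqueness result of \cite{auffinger2015parisi}, $\mu_*=\delta_0$. Since every subsequential weak limit equals $\delta_0$, the whole family satisfies $\mu_\beta\rightharpoonup \delta_0$ as $\beta\to \frac{1}{\sqrt{2}}$.

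To conclude, for fixed $\varepsilon>0$ I would take a continuous function $\varphi_\varepsilon:[0,1]\to[0,1]$ equal to $1$ on $[0,\varepsilon/2]$, $0$ on $[\varepsilon,1]$, and linear in between. Then $\varphi_\varepsilon\leq \mathbf{1}_{[0,\varepsilon]}$ pointwise, so
\begin{eqnarray*}
\mu_\beta([0,\varepsilon]) \geq \int \varphi_\varepsilon\, d\mu_\beta \xrightarrow{\beta\to 1/\sqrt{2}} \int \varphi_\varepsilon\, d\delta_0 = 1,
\end{eqnarray*}
which yields the existence of $\omega>0$ with $\mu_\beta([0,\varepsilon])\geq 1-\varepsilon$ whenever $|\beta-\frac{1}{\sqrt{2}}|\leq \omega$. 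The main technical burden of this plan is the joint continuity of the Parisi functional in Step 2; in principle one may avoid invoking external stability estimates and instead verify it directly from the Cole--Hopf representation of $\Phi_\mu$ on the finitely atomic approximations and a density argument, but either route requires some care because $\alpha_\mu$ lies in a space on which weak and uniform topologies disagree.
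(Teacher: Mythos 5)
Your proposal is correct, but it takes a genuinely different route from the paper. The paper's proof (stated in a single remark after the lemma) appeals to the thermodynamic structure: $F_{N,\beta}$ is convex in $\beta$, hence so is the limiting free energy; by the envelope-type argument at the beginning of Section 2 of \cite{panchenko0}, the derivative of the Parisi formula in $\beta$, where it exists, equals $\beta\bigl(1 - \int q^2\,d\mu_\beta(q)\bigr)$; since $\mu_\beta=\delta_0$ for $\beta\leq\frac{1}{\sqrt{2}}$ gives the explicit derivative $\beta$ on that side, convexity (monotonicity of the derivative, hence one-sided continuity) forces $\int q^2\,d\mu_\beta(q)\to 0$ as $\beta\downarrow\frac{1}{\sqrt{2}}$, and the lemma then follows from Markov's inequality. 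Your argument instead treats the problem as a stability statement for the minimizer of a continuous functional on a compact space: weak compactness of $M[0,1]$, joint continuity of $(\beta,\mu)\mapsto\mathcal{P}_\beta(\mu)$ (which indeed reduces, as you note, to the known $L^1$-Lipschitz estimate of $\Phi_\mu(0,0)$ in the distribution function $\alpha_\mu$ together with the fact that weak convergence of $\mu_n$ implies $L^1$ convergence of $\alpha_{\mu_n}$ by bounded convergence), and uniqueness of the minimizer from \cite{auffinger2015parisi} to identify every subsequential limit with $\delta_0$; the portmanteau cut-off then converts weak convergence into the mass bound. Both routes are sound and both ultimately lean on uniqueness of the Parisi minimizer. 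The paper's route is shorter once the references are in hand and gives a quantitative moment bound directly; yours is more self-contained in that it avoids the convexity/differentiability machinery and would extend verbatim to continuity of $\beta\mapsto\mu_\beta$ at any parameter value where the Parisi measure is known, not merely at a phase boundary with an explicit free energy.
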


This lemma follows from the beginning of Section 2 in \cite{panchenko0} and the fact that $F_{N,\beta}$ is convex in $\beta$.

We now formally define the two auxiliary functions mentioned in \S 1.5. For any probability measure $\mu$, we define the following function on $[0,1]\times[0,1],$
\begin{eqnarray*}
G_\mu(s,t)=\frac{(t-s)\cdot [\Gamma'_\mu(s)+\Gamma'_\mu(t)]}{\Gamma_\mu(t)-\Gamma_\mu(s)}-2.
\end{eqnarray*}
In particular, we define $F_\mu(x)=G_\mu(0,x).$
We then introduce some general properties of $G_\mu$ and $F_\mu$ for $\mu\in M[0,1].$ The reader should keep in mind that $\Gamma'_\mu$ and thus $G_\mu$ may not be differentiable in the support of $\mu$. 
\begin{theorem}\label{thmFderiv}
Suppose that $\frac{1}{\sqrt{2}} \leq \beta \leq 100$ and $\mu \in  M[0,1]$ where $q<q'$ are two adjacent points in supp{\color{white}.}$\mu$ with $(q,q')\notin$ supp{\color{white}.}$\mu$.
\begin{enumerate}
\item $G_\mu(q,t)$ is continuous on $(q,1].$ Moreover, $\lim_{t \rightarrow q^+} G_{\mu}(q,t)= \lim_{t \rightarrow q^+} \partial_t G_{\mu}(q,t)=0.$
\item There exists  constants $A,\eta_0>0$ independent of $\beta,q,q'$  such that $\lim_{t\rightarrow q^+}\partial_t^2 \big\{G_{\mu_\beta}(q,t) \big\}\geq A$ for $\beta \in [\frac{1}{\sqrt{2}}, \frac{1}{\sqrt{2}}+\eta_0]$ and $q \in [0,\eta_0]$.
\item For any $t \in (q,q')$, $\partial^{3}_t G_\mu(q,t)$ exists and there exists a constant $B>0$ independent of $q,q'$ and $\beta$ such that  $\big|\partial^{3}_t G_{\mu}(q,t)\big|\leq B$. 
\end{enumerate}
\end{theorem}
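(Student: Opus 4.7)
The strategy is to base all three parts on a single Taylor expansion of $G_\mu(q,\cdot)$ around $t=q^+$. Since $(q,q')$ contains no atoms of $\mu$, the Cole--Hopf formula recalled in Section~\ref{section4} shows that $\Phi_\mu$ is $C^\infty$ on $\mathbb{R}\times(q,q')$; consequently $\Gamma_\mu$ is smooth on $(q,q')$ with one-sided derivatives of every order at $q^+$. Setting $s=t-q$ and $a=\Gamma'_\mu(q)$, $b=\Gamma''_\mu(q^+)$, $c=\Gamma'''_\mu(q^+)$, $d=\Gamma^{(4)}_\mu(q^+)$, the formula $\Gamma'_\mu = 2\beta^2\mathbb{E}[(\partial_{xx}\Phi_\mu)^2 \exp W_\mu]$ of Theorem~\ref{thmcriterion}, together with $\partial_{xx}\Phi_\mu>0$, yields $a>0$, making division legitimate. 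A direct computation using the Taylor expansions of numerator and denominator then gives
\[
G_\mu(q,t) \;=\; \frac{c}{6a}\, s^2 \;+\; \frac{ad-bc}{12a^2}\, s^3 \;+\; O(s^4) \qquad \text{as } s\to 0^+.
\]
Part~(1) is immediate from this expansion: both limits are visible, and continuity of $G_\mu(q,\cdot)$ on $(q,1]$ follows from continuity of $\Gamma_\mu,\Gamma'_\mu$ on $[0,1]$ (Theorem~\ref{thmcriterion}) together with the strict inequality $\Gamma_\mu(t)>\Gamma_\mu(q)$ for $t>q$.

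For Part~(2) the expansion identifies $\lim_{t\to q^+}\partial_t^2 G_\mu(q,t) = c/(3a) = \Gamma'''_\mu(q^+)/(3\Gamma'_\mu(q))$. I first evaluate this in the baseline case $\mu=\delta_0$, $q=0$: the Parisi PDE admits the closed-form solution $\Phi_{\delta_0}(x,u)=\log\cosh x +\beta^2(1-u)$, giving $\exp W_{\delta_0}(u)=\cosh(M(u))\,e^{-\beta^2 u}$ and the compact representation $\Gamma_{\delta_0}(u) = 1 - e^{-\beta^2 u}\,\mathbb{E}[\operatorname{sech}(M(u))]$ with $M(u)\sim N(0,2\beta^2 u)$. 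Taylor expanding $\operatorname{sech}$ and taking Gaussian moments yields $\Gamma_{\delta_0}(u) = 2\beta^2 u - 4\beta^4 u^2 + \frac{40}{3}\beta^6 u^3 + O(u^4)$, so $\lim_{t\to 0^+}\partial_t^2 G_{\delta_0}(0,t) = 40\beta^4/3 \ge 10/3$ for $\beta\ge 1/\sqrt 2$. To transfer this positivity to $\mu_\beta$ I invoke Lemma~\ref{continuitylem} ($\mu_\beta([0,\varepsilon])\ge 1-\varepsilon$ for $\beta$ near $1/\sqrt 2$) and express $\Phi_{\mu_\beta}$ on the gap via the Cole--Hopf formula; the parameters $m=\mu_\beta([0,q])$, $q'$, and terminal $\Phi_{\mu_\beta}(\cdot,q')$ all converge to their $\delta_0$-values as $\beta\to(1/\sqrt 2)^+$ and $q\to 0^+$, so $c/(3a)\to 40\beta^4/3$. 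Shrinking $\eta_0$ then produces an absolute $A>0$.

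For Part~(3) I factor $G_\mu(q,t)+2=\tilde f(t)/\tilde g(t)$ with $\tilde f(t)=\Gamma'_\mu(q)+\Gamma'_\mu(t)$ and $\tilde g(t) = \int_0^1 \Gamma'_\mu(q+r(t-q))\,dr$; both are smooth on $(q,q')$ and extend smoothly up to $q$. The quotient rule writes $\partial_t^3 G_\mu$ as a rational function of $\{\tilde f^{(k)},\tilde g^{(k)}\}_{k\le 3}$ with only powers of $\tilde g$ in the denominator, so the uniform bound $B$ reduces to (i) uniform upper bounds on $|\Gamma^{(k)}_\mu(u)|$ for $k\le 4$ and (ii) a uniform positive lower bound on $\tilde g$. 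For (i), differentiating $\Gamma_\mu(u) = \mathbb{E}[(\partial_x\Phi_\mu)^2\exp W_\mu]$ under the expectation using Ito calculus reduces the estimates to the standard pointwise bounds $|\partial_x\Phi_\mu|\le 1$, $0\le\partial_{xx}\Phi_\mu\le 1$ together with their analogues for higher $x$-derivatives, all uniform in $\mu$ and $\beta\in[1/\sqrt 2,100]$. For (ii), $\tilde g(t)\ge\min_{u\in[q,t]}\Gamma'_\mu(u)$, and a uniform lower bound on $\Gamma'_\mu$ follows from the same estimates.

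The main obstacle will be the continuity transfer in Part~(2): $\Gamma'''_{\mu_\beta}(q^+)$ is a third right derivative of a nonlinear Cole--Hopf formula whose terminal datum $\Phi_{\mu_\beta}(\cdot,q')$ depends on the unknown structure of $\mu_\beta$ on $[q',1]$, so stability under the weak convergence $\mu_\beta\to\delta_0$ demands $C^3$-type convergence of Parisi PDE solutions (rather than mere pointwise or $L^2$ convergence) and must avoid any quantitative dependence on the fine structure of $\mu_\beta$.
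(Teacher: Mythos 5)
Your proposal follows the same route as the paper: a Taylor/L'H\^opital analysis of $G_\mu(q,t)$ near $t=q^+$ reducing parts (1) and (2) to the identity $\lim_{t\to q^+}\partial_t^2 G_\mu(q,t)=\Gamma'''_\mu(q^+)/(3\Gamma'_\mu(q))$, then evaluation at $\mu=\delta_0,\,q=0$, then a continuity transfer to $\mu_\beta$, and finally rational-function bounds for part (3). Two of your steps are organized more cleanly than the paper's: your single Taylor expansion $G_\mu(q,t)=\tfrac{c}{6a}s^2+\tfrac{ad-bc}{12a^2}s^3+O(s^4)$ packages the three L'H\^opital computations the paper carries out separately, and your baseline computation via the closed form $\Gamma_{\delta_0}(u)=1-e^{-\beta^2 u}\,\mathbb{E}[\operatorname{sech}(M(u))]$ with a $\operatorname{sech}$-series gives the same $\Gamma'''_{\delta_0}(0^+)=80\beta^6$ that the paper extracts from the differentiation formula of Theorem~\ref{thmcal} (the paper observes $\partial_x^3\Phi$ is odd to kill the cross term and then crudely lower-bounds by $32\beta^6$). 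The factorization $G_\mu(q,t)+2=\tilde f(t)/\tilde g(t)$ with $\tilde g(t)=\int_0^1\Gamma'_\mu(q+r(t-q))\,dr$ is also a neat way to remove the apparent singularity at $q$, sidestepping the L'H\^opital evaluation of $\lim_{t\to q^+}\partial_t^3 G_\mu$ that the paper performs. Two caveats. First, the gap you flag in part (2) -- that you need $\Gamma'''_{\mu_\beta}(q^+)\to\Gamma'''_{\delta_0}(0^+)$, i.e.\ $C^3$-level stability of $\Gamma_\mu$ under $\mu_\beta\to\delta_0$ -- is exactly the point the paper closes by invoking Proposition~1(ii) and Lemma~2 of \cite{AChen15PTRF} (continuity and uniform boundedness of functionals of the form \eqref{Fmu}) to get $|\Gamma'_{\mu_\beta}(q)-\Gamma'_{\delta_0}(0)|\leq 1/3$ and $|\Gamma'''_{\mu_\beta}(q^+)-\Gamma'''_{\delta_0}(0^+)|\leq 1/3$; so your instinct is right that this is where the nontrivial input sits. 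Second, your claim in part (3) that the uniform lower bound on $\tilde g$ (equivalently on $\Gamma'_\mu$) ``follows from the same estimates'' is too quick: the pointwise bounds $|\partial_x^k\Phi_\mu|\lesssim 1$ only give upper bounds. One does get a lower bound, for instance via Jensen ($\Gamma'_\mu\geq 2\beta^2(1-\Gamma_\mu)^2$, $\Gamma_\mu\leq\Gamma_\mu(1)<1$ uniformly on $\beta\in[1/\sqrt2,100]$), but this is a separate argument that should be spelled out; the paper's own treatment of the analogous step is also brief, so this is a shared soft spot rather than a flaw unique to your route.
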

\begin{corollary}\label{coroFderiv}
Under the assumption of Theorem \ref{thmFderiv}, for any $\frac{1}{\sqrt{2}}\leq \beta \leq \frac{1}{\sqrt{2}}+\eta_0$ and $0 \leq q \leq \eta_0,$ we must have $G_{\mu_\beta}(q,t)>0$ for any $t \in \big(q,\min(q',q+\frac{A}{3B})\big]$.
\end{corollary}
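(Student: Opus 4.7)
The strategy is a one-sided Taylor expansion at $t=q$ applied to the function $h(t):=G_{\mu_\beta}(q,t)$ on the interval $(q,q')$, combining all three items of Theorem \ref{thmFderiv}. Item (1) supplies the boundary values $\lim_{t\to q^+}h(t)=0$ and $\lim_{t\to q^+}h'(t)=0$; item (2) gives the one-sided lower bound $\lim_{t\to q^+}h''(t)\geq A$ (valid since $\beta\in[\frac{1}{\sqrt{2}},\frac{1}{\sqrt{2}}+\eta_0]$ and $q\in[0,\eta_0]$); and item (3) controls the third derivative by $|h'''(t)|\leq B$ throughout $(q,q')$, where $(q,q')$ being disjoint from supp{\color{white}.}$\mu_\beta$ is what guarantees $h'''$ exists there. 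These are precisely the ingredients needed to extract a quantitative positive lower bound on $h$ near $q$.

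Concretely, I first extend $h''$ by its right-limit at $q$ and combine $|h'''|\leq B$ with the fundamental theorem of calculus to obtain $h''(t)\geq A-B(t-q)$ for $t\in(q,q')$. Integrating once and using $\lim_{t\to q^+}h'(t)=0$ yields $h'(t)\geq A(t-q)-\tfrac{B}{2}(t-q)^2$, and a second integration together with $\lim_{t\to q^+}h(t)=0$ gives
$$h(t)\geq \frac{A(t-q)^2}{2}-\frac{B(t-q)^3}{6}=\frac{(t-q)^2}{6}\bigl[3A-B(t-q)\bigr],\qquad t\in(q,q').$$
For any $t\leq q+\tfrac{A}{3B}$ one has $B(t-q)\leq A/3$, hence $3A-B(t-q)\geq 8A/3$, and therefore
$$h(t)\geq \frac{(t-q)^2}{6}\cdot\frac{8A}{3}=\frac{4A(t-q)^2}{9}>0$$
for every $t\in\bigl(q,\min(q',q+\tfrac{A}{3B})\bigr]$, which is exactly the positivity claim of the corollary.

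I do not anticipate any serious technical obstacle here: the argument is nothing more than a twice-integrated Taylor remainder estimate, and each input is precisely what Theorem \ref{thmFderiv} delivers. All of the genuine work sits inside Theorem \ref{thmFderiv} itself, in particular in item (2), where one must verify that the second $t$-derivative of $G_{\mu_\beta}(q,\cdot)$ at the right-limit is uniformly bounded below by an absolute constant; this is the step that leverages both the sum-of-squares representation of $\partial_t^2G_{\delta_0}(0,0^+)$ sketched in \S\ref{proofidea} and the closeness of $\mu_\beta$ to $\delta_0$ guaranteed by Lemma \ref{continuitylem}. Once item (2) is in hand, the three-line Taylor argument above immediately delivers Corollary \ref{coroFderiv}.
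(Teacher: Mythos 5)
Your proposal is correct and takes essentially the same approach as the paper: a twice-integrated Taylor remainder estimate combining all three items of Theorem \ref{thmFderiv} (the paper truncates $h''\geq \frac{2A}{3}$ before integrating and so obtains the slightly looser constant $\frac{A}{3}$ in place of your $\frac{4A}{9}$, but the argument is identical in structure). One trivial point worth making explicit: your integration bounds are derived on the open interval $(q,q')$, so when $q'\leq q+\frac{A}{3B}$ and $t=q'$ you should, as the paper does, invoke the continuity of $G_{\mu_\beta}(q,\cdot)$ on $(q,1]$ from item (1) to carry the strict positivity to the closed endpoint.
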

\begin{proof}[Proof of Corollary \ref{coroFderiv}]
By Theorem \ref{thmFderiv}, we have that  $\lim_{t \rightarrow q^+} \partial^2_{t}G_{\mu_\beta}(q,t)\geq A$, for $\frac{1}{\sqrt{2}}\leq \beta\leq\frac{1}{\sqrt{2}}+\eta_0$ and $0 \leq q \leq \eta_0.$ We then obtain that for $t \in \big(q,\min(q',q+\frac{A}{3B})\big),$
\begin{eqnarray*}
\partial^2_{t}G_{\mu_\beta}(q,t) &=& \lim_{t \rightarrow q^+} \partial^2_{t}G_{\mu_\beta}(q,t)+\int^t_q \partial^3_t G_{\mu_\beta}(q,s)ds\\
&\geq& \lim_{t \rightarrow q^+} \partial^2_{t}G_{\mu_\beta}(q,t) -\int^t_q | \partial^3_t G_{\mu_\beta}(q,s)|ds\\
&\geq& A -( t-q)B\\
&\geq& \frac{2A}{3}.
\end{eqnarray*}
We then have that $G_{\mu_\beta}(q,t) $ and $\partial_{t}G_{\mu_\beta}(q,t) $ are strictly positive for $t \in \big(q,\min(q',q+\frac{A}{3B})\big),$ with the following quantitative lower bounds:
\begin{eqnarray*}
\partial_{t}G_{\mu_\beta}(q,t)&=& \lim_{t\rightarrow q^+} \partial_{t}G_{\mu_\beta}(q,t)+\int^t_q\partial^2_{t}G_{\mu_\beta}(q,s)ds \geq (t-q) \cdot \frac{2A}{3},
\end{eqnarray*}
and
\begin{eqnarray*}
 G_{\mu_\beta}(q,t)&=& \lim_{t \rightarrow q^+} G_{\mu_\beta}(q,t)+\int^t_q \partial_t G_{\mu_\beta}(q,s)ds
\geq (t-q)^2 \cdot \frac{A}{3},
\end{eqnarray*}
for $t \in \big(q,\min(q',q+\frac{A}{3B})\big).$ Since $G_{\mu_\beta}(q,t)$ is continuous on $(q,1],$ it holds that $G_{\mu_\beta}(q,t) >0$ for $t \in \big(q,\min(q',q+\frac{A}{3B})\big].$
\end{proof}

We also consider a special case of $F_\mu$ when $\mu=\delta_0$ as follows:
\begin{theorem}\label{Fm}
$F_{\delta_0}(x)$ is strictly increasing for any $x \in [0,1]$ and $\beta>0.$  For any $\beta \geq \frac{1}{\sqrt{2}}$, there exists $M>0$ independent of $\beta$ (but may depend on $A$ and $B$) such that $F_{\delta_0}(x)\geq M,$ for $ \frac{A}{3B} \leq x \leq 1$.
\end{theorem}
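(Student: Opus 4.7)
The plan is to exploit the fact that $\mu=\delta_0$ admits an explicit Cole--Hopf solution of the Parisi PDE, and to combine this with the \emph{reverse Gaussian integration by parts} (GIBP) strategy introduced in the author's prior work \cite{Zhou}. A direct computation using $\xi(x)=\beta^2 x^2$ gives $\Phi_{\delta_0}(x,u)=\beta^{2}(1-u)+\log\cosh x$, whence $\partial_x\Phi_{\delta_0}=\tanh x$ and $\partial_{xx}\Phi_{\delta_0}=\operatorname{sech}^{2} x$. Substituting these into the definitions of $W_{\delta_0}$ and $\Gamma_{\delta_0}$, and recognising $e^{-\beta^{2}u}\cosh M(u)$ as the Radon--Nikodym derivative of a Girsanov tilt under which the law of $M(u)$ becomes $T+\sqrt T\,g$, with $T:=2\beta^{2}u$ and $g\sim\mathcal N(0,1)$, one obtains
\[
\Gamma_{\delta_0}(u)=\mathbb E\tanh^{2}(Y_T),\qquad \Gamma'_{\delta_0}(u)=2\beta^{2}\,\mathbb E\operatorname{sech}^{4}(Y_T),\qquad Y_T:=T+\sqrt T\,g.
\]
The crucial consequence is that $F_{\delta_0}$ depends on $(\beta,x)$ only through $T=2\beta^{2}x$: setting
\[
\tilde F(T):=\frac{T\bigl(1+\mathbb E\operatorname{sech}^{4} Y_T\bigr)}{\mathbb E\tanh^{2} Y_T}-2,\qquad T\ge 0,
\]
we have $F_{\delta_0}(x)=\tilde F(2\beta^{2}x)$, so the first part of the theorem reduces to the $\beta$-free claim that $\tilde F$ is strictly increasing on $[0,\infty)$.

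Abbreviate $a(T)=\mathbb E\tanh^{2}Y_T$ and $b(T)=\mathbb E\operatorname{sech}^{4}Y_T$. The chain rule combined with the formulas above immediately yields the clean identity $a'(T)=b(T)$, and the quotient rule then gives
\[
\tilde F'(T)>0\quad\Longleftrightarrow\quad (1+b)(a-Tb)+T\,a\,b'>0.
\]
Here the first term is nonnegative (since $a-Tb=-\int_0^{T}s\,b'(s)\,ds$ and $b$ is decreasing), while the second is nonpositive, so a delicate quantitative cancellation is required and any naive estimate of either term alone must fail. The core step of the proof is to apply GIBP in the \emph{reverse} direction: rather than using $\mathbb E[gf(Y_T)]=\sqrt T\,\mathbb Ef'(Y_T)$ to remove the Gaussian factor $g$, I run this identity backwards to \emph{introduce} $g$-factors, thereby absorbing the explicit factors of $T$ in the sign condition into the tanh/sech moments themselves. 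After a few such manipulations, the condition $\tilde F'(T)>0$ is reduced to a short list of deterministic linear inequalities among expectations of polynomials in $\tanh(Y_T)$ and $\operatorname{sech}(Y_T)$; I verify these using the GIBP recurrences $\phi_k'(t)=k^{2}\phi_k-k(k+1)\phi_{k+2}$ for $\phi_k(t)=\mathbb E\operatorname{sech}^{k}(\sqrt{2t}\,g)$ combined with the sharp inverse-trigonometric inequality from \cite{arcsinh}.

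The second assertion is then immediate: for $\beta\ge 1/\sqrt 2$ and $x\ge A/(3B)$ we have $T=2\beta^{2}x\ge A/(3B)$, so strict monotonicity of $\tilde F$ together with $\tilde F(0)=0$ gives $F_{\delta_0}(x)=\tilde F(T)\ge \tilde F\bigl(A/(3B)\bigr)=:M>0$, an absolute constant depending only on $A,B$ because $\tilde F$ itself is $\beta$-independent. The main obstacle will be those final linear inequalities: a short Taylor expansion shows $\tilde F(T)=O(T^{2})$ near the origin (so $\tilde F(0)=\tilde F'(0^{+})=0$), which means the bound $\tilde F'(T)>0$ is degenerate at $T=0^{+}$ and the inequalities produced by the reverse-GIBP step are correspondingly very tight there; lossy estimates are doomed. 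The precision of the $\operatorname{arcsinh}$ inequality from \cite{arcsinh}---the same tool that powered the 1RSB characterisation in \cite{Zhou}---is precisely what allows the required bounds to close rigorously.
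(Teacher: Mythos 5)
Your reduction to the $\beta$-independent function $\tilde F(T)$ with $T=2\beta^2 x$ is a clean and correct reformulation, and your identity $a'(T)=b(T)$ together with the quotient-rule simplification $\tilde F'(T)>0 \iff (1+b)(a-Tb)+Tab'>0$ matches the paper's decomposition exactly (up to a harmless overall positive factor $a_1^2$ where $a_n(x)=\mathbb{E}\cosh^n(\sqrt{2\beta^2 x}\,g)$). The sign analysis and the deduction of the second assertion from strict monotonicity plus $\tilde F(0)=0$ are also sound.

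The genuine gap is that the core step of the proof is asserted but never carried out. You correctly observe that $(1+b)(a-Tb)\geq 0$ and $Tab'\leq 0$ and that a ``delicate quantitative cancellation'' is needed, then say ``after a few such manipulations'' one reaches ``a short list of deterministic linear inequalities'' that are verified by the arcsinh bound --- but those manipulations, the specific inequalities, and their proofs are exactly where the difficulty lives, and none of them appear. Concretely, the paper's argument requires (i) a nontrivial splitting of the two contributions with a weight $\frac{2\beta^2 x}{1+6\beta^2 x}$ so that each piece can be treated separately (a naive pairing fails, as you yourself note); (ii) reverse Gaussian integration by parts to convert each piece into an expectation of a pointwise expression in a single variable, yielding the three scalar inequalities \eqref{GIP1}--\eqref{GIP3}; and (iii) a case-by-case proof of those three inequalities via monotonicity arguments, the bound $\frac{\arctan z}{z}\leq (1+z^2)^{-1/3}$, and a Sturm-theorem analysis for the hardest one. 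Your recurrence $\phi_k'=k^2\phi_k-k(k+1)\phi_{k+2}$ is correct but is a tool, not a derivation; it does not by itself produce the required deterministic inequalities nor establish their validity. As it stands, the proposal is a faithful outline of the paper's strategy but not a proof, because the steps it leaves implicit are precisely the ones that are hard and could conceivably fail if the splitting weight or the target inequalities were chosen differently.
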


Lastly,   we introduce the following lemma regarding the calculation of derivatives of $\Gamma_\mu$  in \cite{AChen15PTRF}:
\begin{theorem}[Lemma 2 in \cite{AChen15PTRF}] \label{thmcal}
For any $\mu\in M[0,1]$ be continuous on $[a,b]$ for some $a,b \in [0,1].$ Suppose that $L$ is a a polynomial on $\mathbb{R}^k.$ Define
\begin{eqnarray}\label{Fmu}
P_\mu(u)=\mathbb{E}\big[ L(\partial_x\Phi_\mu(M(u),u),\cdots, \partial^k_x \Phi_\mu(M(u),u))\exp W_\mu(u)\big]
\end{eqnarray}
for $u \in [0,1].$ Then for $u \in [a,b],$
\begin{eqnarray}\label{calgamma}
\frac{d}{du}\{ P_\mu(u) \} &=&\frac{\xi''(u)}{2} \mathbb{E} \Bigg[ \bigg( \sum^k_{i,j=1} \partial_{y_i}\partial_{y_j} L(\partial_x\Phi_\mu,\cdots, \partial^k_x \Phi_\mu )\partial^{i+1}_x \Phi_\mu \Phi^{j+1}_x \Phi_\mu \nonumber \\
&&-\mu([0,u]) \sum^k_{i=1}\sum^{i-1}_{j=1} \begin{pmatrix} i\\j\end{pmatrix} \partial_{y_i}L(\partial_x\Phi_\mu,\cdots, \partial^k_x \Phi_\mu ) \partial^{j+1}_x \Phi_\mu \partial^{i-j+1}_x \Phi_\mu \bigg) \exp W_\mu(u)\Bigg].
\end{eqnarray}

\end{theorem}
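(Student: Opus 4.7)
The plan is It\^o's formula applied to the integrand $Y(u) := L(\Psi_1(u), \ldots, \Psi_k(u)) \exp W_\mu(u)$, where I abbreviate $\Psi_i(u) := \partial_x^i \Phi_\mu(M(u), u)$. Three facts drive the computation: (i) $M(u) = B(\xi'(u))$ has quadratic variation $\xi''(u)\,du$; (ii) differentiating the Parisi PDE \eqref{ParisiPDE} $i$ times in $x$ gives
\begin{equation*}
\partial_u \Psi_i = -\frac{\xi''(u)}{2}\Bigl[\Psi_{i+2} + \alpha_\mu(u)\sum_{j=0}^{i}\binom{i}{j}\Psi_{1+j}\Psi_{1+i-j}\Bigr];
\end{equation*}
(iii) by Abel summation $W_\mu(u) = \alpha_\mu(u)\Phi_\mu(M(u),u) - \int_0^u \Phi_\mu(M(s),s)\,d\mu(s)$, and the continuity of $\mu$ on $[a,b]$ guarantees $\alpha_\mu$ has no interior jumps there, so that in the semimartingale sense on $[a,b]$ one has $dW_\mu(u) = \alpha_\mu(u)\,d\Phi_\mu(M(u),u)$.

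Combining (i)--(iii) with It\^o's formula yields the decompositions
\begin{equation*}
d\Psi_i = \Psi_{i+1}\sqrt{\xi''(u)}\,dB - \frac{\xi''(u)}{2}\alpha_\mu(u)\sum_{j=0}^{i}\binom{i}{j}\Psi_{1+j}\Psi_{1+i-j}\,du,
\end{equation*}
\begin{equation*}
dW_\mu = \alpha_\mu(u)\Psi_1\sqrt{\xi''(u)}\,dB - \frac{\xi''(u)}{2}\alpha_\mu(u)^2 \Psi_1^2\,du.
\end{equation*}
In particular the It\^o correction for $\exp W_\mu$ exactly cancels its drift, so $d(\exp W_\mu) = \alpha_\mu(u)\Psi_1\sqrt{\xi''(u)}\exp W_\mu\,dB$ is purely a local martingale. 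Applying the It\^o product/chain rule to $Y(u)$ then produces three finite-variation contributions: (a) the drift of $L$ pulled back through the $d\Psi_i$'s, weighted by $\partial_{y_i}L$; (b) the second-order correction $\tfrac{\xi''}{2}\sum_{i,j}\partial_{y_i}\partial_{y_j}L\,\Psi_{i+1}\Psi_{j+1}$ from the $\Psi$-brackets; and (c) the cross bracket $\alpha_\mu \xi''\sum_i \partial_{y_i}L\,\Psi_1\Psi_{i+1}$ coming from $d\langle L,\exp W_\mu\rangle$. The boundary indices $j=0$ and $j=i$ in (a) each contribute $-\frac{\xi''}{2}\alpha_\mu\,\partial_{y_i}L\,\Psi_1\Psi_{i+1}$, and together they cancel (c) exactly. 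What remains is the truncated sum $\sum_{j=1}^{i-1}\binom{i}{j}\Psi_{1+j}\Psi_{1+i-j}$ from (a), combined with (b), giving precisely \eqref{calgamma}.

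It remains to pass the derivative inside the expectation and discard the martingale parts, which is where the bulk of the technical work sits. This follows from standard Gaussian moment estimates combined with the a priori bounds $|\partial_x \Phi_\mu|\le 1$ and $0\le \partial_{xx}\Phi_\mu\le 1$ obtained from the maximum principle on the Parisi PDE, extended to at-most-polynomial-in-$x$ bounds on higher $x$-derivatives inherited from the terminal data $\log\cosh x$; these control $Y(u)$ and its stochastic integrands in every $L^p$, uniformly on $[a,b]$. The most delicate point is (iii): atoms of $\mu$ outside $[a,b]$ affect the value of $\alpha_\mu$ on $[a,b]$ but not its differentials there, so the Abel identity is exact on $[a,b]$ and the argument localizes cleanly. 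With these verifications in place, $\frac{d}{du}P_\mu(u)$ equals the expectation of the finite-variation part computed above, matching \eqref{calgamma} term by term.
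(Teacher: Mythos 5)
Your proposal is correct. The paper itself does not prove this statement; it is quoted verbatim as Lemma~2 of Auffinger--Chen \cite{AChen15PTRF}, and the proof there is precisely the It\^o-calculus argument you give: decompose $\Psi_i = \partial_x^i\Phi_\mu(M(u),u)$ and $\exp W_\mu(u)$ as semimartingales using the differentiated Parisi PDE and the time change $d\langle M\rangle = \xi''\,du$, observe that $\exp W_\mu$ is driftless, and collect the bounded-variation parts; your identification of the cancellation between the $j=0,i$ boundary terms of the Leibniz expansion and the cross quadratic variation $d\langle L,\exp W_\mu\rangle$ is exactly the right bookkeeping that produces the truncated sum $\sum_{j=1}^{i-1}$ in \eqref{calgamma}.
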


\subsection{Proof Outline of Theorem \ref{mainthm}}

In this section, we {give a more detailed outline of} the proof for our main results. 
In order to prove our main results, we will first assume $\beta \in \big(\frac{1}{\sqrt{2}}, \frac{1}{\sqrt{2}}+\eta\big]$ for some $\eta>0$ and then prove  supp{\color{white}.}$\mu_\beta=[0,\upsilon_\beta] $, for some $\upsilon_\beta\in(0,\eta)$ by ruling out the following three cases. {\color{black}Below $\mathcal{K}>0$ will be a small absolute constant chosen in the discussion of Case I.}

\subsubsection{Case I: {\color{black}$(q,q') \cap$ supp{\color{white} .}$\mu_{\beta}=\emptyset$,   $q\leq\mathcal{K}$, $q'-q>\frac{A}{3B}$ and $q,q' \in$ supp{\color{white} .}$\mu_{\beta}$.}}

As mentioned in \S\ref{proofidea}, we will use $G_\mu$  to prove the four relations  that $q,q'$ needs to satisfy in Theorem \ref{thmcriterion}
\begin{eqnarray}\label{4relations}
\Gamma_{\mu_\beta}(q)=q, \Gamma_{\mu_\beta}(q')=q',\Gamma'_{\mu_\beta}(q)\leq1 \text{ and } \Gamma'_{\mu_\beta}(q')\leq 1
\end{eqnarray}
cannot hold simultaneously.




We will show that there exists {\color{black}choices} $\eta, \mathcal{K}>0$, such that $G_{\mu_\beta}(q,q')>0$ for $\frac{1}{\sqrt{2}}<\beta\leq\frac{1}{\sqrt{2}}+\eta$ and $0\leq q \leq \mathcal{K}$. But this contradicts at least one of the four above properties: $\Gamma_{\mu_\beta}(q)=q, \Gamma_{\mu_\beta}(q')=q',\Gamma'_{\mu_\beta}(q)\leq1 \text{ and } \Gamma'_{\mu_\beta}(q')\leq 1$. 



With the above preparation, now we are ready to rule out Case I. {\color{black}Suppose that $\mu_\beta([0,q])=m$.  We first note that $\lim_{\beta \rightarrow \frac{1}{\sqrt{2}}} \mu_\beta=\delta_0$ {(in the sense of Lemma \ref{continuitylem})}. Thus when $\beta$ is very close to $\frac{1}{\sqrt{2}}$, $m$ must be very close to $1$. Now by Theorem \ref{Fm} and the continuity of $G_\mu(q,\cdot)$ in $\mu$ and $q$, there exists $0<\eta_1<\min(\eta_0,0.1)$ such that whenever $\frac{1}{\sqrt{2}}<\beta\leq\frac{1}{\sqrt{2}}+\eta_1$, we  have that  for $0 \leq q \leq \eta_1$,
\begin{eqnarray*}
    G_{\mu_\beta}(q,t)>\frac{M}{2}>0, \text{ for }q+ \frac{A}{3B} \leq t \leq 1,
\end{eqnarray*} 
and for $0 \leq q \leq 1,$
\begin{eqnarray}\label{Gammaapprox}
\big|\Gamma'_{\mu_\beta}(q)- \Gamma'_{\delta_0}(q)\big|< 0.05 \text{ and } \big|\Gamma''_{\mu_\beta}(q^-)- \Gamma''_{\delta_0}(q)\big|< 0.05.
\end{eqnarray}
\eqref{Gammaapprox} will not be used here but in Case III. We state it here so that we can now make the choice of $\mathcal{K}$.

From this point on, we will fix $\mathcal{K}=\eta_1$. Hence in this case  $G_{\mu_\beta}(q,q')>0$, which contradicts \eqref{4relations}. Thus we have ruled out Case I.}

\subsubsection{Case II: {\color{black}$(q,q') \cap$ supp{\color{white} .}$\mu_{\beta}=\emptyset$,   $q\leq\mathcal{K}$, $q'-q\leq \frac{A}{3B}$ and $q,q' \in$ supp{\color{white} .}$\mu_{\beta}$.}} 


Recall from \eqref{4relations} that if $\mu_\beta$ is the Parisi measure for  $\beta$, then it holds that \begin{eqnarray*}
\Gamma_{\mu_\beta}(q)=q,\Gamma_{\mu_\beta}(q')=q',\Gamma_{\mu_\beta}'(q)\leq1 \text{ and }\Gamma_{\mu_\beta}'(q')\leq1.
\end{eqnarray*}



By Corollary \ref{coroFderiv}, 
since $0 \leq q \leq \mathcal{K}<\eta_0$ and $q'-q\leq \frac{A}{3B}$,  we must have $G_{\mu_\beta}(q,q')>0$ for any  $\frac{1}{\sqrt{2}}\leq \beta \leq \frac{1}{\sqrt{2}}+\eta_1$, which contradicts \eqref{4relations}. We then have also ruled out Case II for $\beta\in [\frac{1}{\sqrt{2}}, \frac{1}{\sqrt{2}}+\eta_1] $.


\subsubsection{Case III: $[\mathcal{K},1]\cap$ supp{\color{white} .}$\mu_{\beta}\not=\emptyset$.}\label{CaseIII}

{\color{black}We rule out this case by the method of the proof of Theorem 4 in \cite{AChen15PTRF}.}
 
For $\beta \in[ \frac{1}{\sqrt{2}},\frac{1}{\sqrt{2}}+\eta_1], $ we define 
$q_0:=\sup\{ q | [0,q] \subseteq \text{supp\color{white}.}\mu_\beta \text{ for some }0 <q\leq 1 \}$. 
Without loss of generality, we may assume that $q_0\geq \mathcal{K}.$ Indeed, if $0 \leq q_0 < \mathcal{K} ,$ combining with the assumption that $[\mathcal{K},1]\cap$ supp{\color{white} .}$\mu_{\beta}\not=\emptyset$, it will  then be ruled out by the previous two cases. We define $m_1:= \mu_\beta([0,q_0))$ and $m_2:= \mu_\beta([0,q_0])$. 

Recall from \eqref{Gammaapprox}, for $ \frac{1}{\sqrt{2}}< \beta \leq \frac{1}{\sqrt{2}}+\eta_1$, we have that
\begin{eqnarray*}
&\big|\Gamma'_{\mu_\beta}(q_0)- \Gamma'_{\delta_0}(q_0)\big|< 0.05 \text{ and } \big|\Gamma''_{\mu_\beta}(q_0^-)- \Gamma''_{\delta_0}(q_0)\big|< 0.05.&
\end{eqnarray*}
Since we exclude Cases I and II, based on the definition of $q_0$ and Theorem \ref{thmcriterion}, it then holds that $\Gamma_{\mu_\beta}'(q_0)=1$ and $\Gamma_{\mu_\beta}''(q_0^-)=0.$   
Observe that $\Phi_{\delta_0}(x,q_0)=\log \cosh x+\beta^2(1-q_0)$ and then  by Theorem \ref{thmcriterion},
\begin{eqnarray*}
\Gamma'_{\delta_0}(q_0)&=&2 \beta^2 \mathbb{E}\big[ \cosh(M(q_0))^{-4} \exp W_{\delta_0}(q_0) \big].
\end{eqnarray*}
Also by Lemma \ref{thmcal}, we compute $\Gamma''_{\delta_0}(q_0)$ as follows:
\begin{eqnarray*}
\Gamma''_{\delta_0}(q_0)&=&2\beta^2 \mathbb{E}\big[ 4\big(\cosh^{-4}(M(q_0))-6\cosh^{-6}(M(q_0))\big) \exp W_{\delta_0}(q_0)  \big].
\end{eqnarray*}
We then obtain that
\begin{eqnarray*}
\big| 2 \beta^2 \mathbb{E}\big[ \cosh(M(q_0))^{-4} \exp W_{\mu_\beta}(q_0) \big]-1\big| <0.05,
\end{eqnarray*}
and 
\begin{eqnarray*}
2\beta^2 \Big| \mathbb{E}\big[ 4\big(\cosh^{-4}(M(q_0))-6\cosh^{-6}(M(q_0))\big) \exp W_{\mu_\beta}(q_0)  \big] \Big| <0.05.
\end{eqnarray*}
Combining the inequalities above, by Jensen's inequality, we obtain that
\begin{eqnarray}\label{contradiction3}
0.95 &\leq&   2 \beta^2 \mathbb{E}\big[ \cosh(M(q_0))^{-4} \exp W_{\mu_\beta}(q_0) \big] \nonumber \\
&\leq & 2 \beta^2 \Big( \mathbb{E}\big[ \cosh(M(q_0))^{-6} \exp W_{\mu_\beta}(q_0) \big] \Big)^{2/3} \nonumber \\
&\leq & 2 \beta^2 \Big( \frac{1}{240\beta^2}+  \frac{2}{3} \mathbb{E}\big[ \cosh(M(q_0))^{-4} \exp W_{\mu_\beta}(q_0) \big]  \Big)^{2/3}  \nonumber \\
& \leq & 2 \beta^2 \Big( \frac{1}{240\beta^2}+  \frac{2}{3}  \cdot \frac{1.05}{2\beta^2}  \Big)^{2/3} \nonumber\\
&=& 2 \Big( \frac{1}{240}+  \frac{2}{3}  \cdot \frac{1.05}{2}  \Big)^{2/3} {\beta^{2/3} } .
\end{eqnarray}
Here  we use the relation that $\mathbb{E}[\exp W_{\mu_\beta}(q_0)]=1$ in the second inequality.
However, since we only consider $\beta$ slightly beyond the high temperature regime, i.e. $\beta \in [\frac{1}{\sqrt{2}}, \frac{1}{\sqrt{2}}+\eta_1]$ and $\eta_1<0.1$, we then obtain that $\beta<0.85$ and
$$2 \Big( \frac{1}{240}+  \frac{2}{3}  \cdot \frac{1.05}{2}  \Big)^{2/3} {\beta^{2/3} } < 0.9,$$
which leads to a contradiction with \eqref{contradiction3}. We then have also ruled out Case III for $\beta\in [\frac{1}{\sqrt{2}}, \frac{1}{\sqrt{2}}+\eta_1] $.

Therefore, for any $\beta\in (\frac{1}{\sqrt{2}}, \frac{1}{\sqrt{2}}+\eta_1] $, all the three cases discussed above have been ruled out for $\mu_\beta$. Since $\mu_\beta \not= \delta_0$ for $\beta\in (\frac{1}{\sqrt{2}}, \frac{1}{\sqrt{2}}+\eta_1] $, then the only possibility is that supp{\color{white}.}$\mu_\beta=[0,v_\beta]$ for some $v_\beta\in (0,\eta_1).$ {\color{black}Hence we have proved Theorem \ref{mainthm}.

\subsection{Proof of Corollary \ref{coromain}} Next we will derive Corollary \ref{coromain}  directly from  Theorem \ref{mainthm} by a result of Auffinger-Chen\cite{AChen15PTRF}.}

\begin{proof}[Proof of Corollary \ref{coromain}]
By Theorem 2 in \cite{AChen15PTRF}, since $(0,\upsilon_\beta)\subseteq$ supp{\color{white}.}$\mu_\beta$, then the distribution function of $\mu_\beta$ is infinitely differentiable on $[0,\upsilon_\beta)$. {\color{black} Also, by   taking the limit $u \to 0^+$ in (21) of \cite{AChen15PTRF} we obtain $\mu_{\beta} (\{0\}) = 0$.}

Moreover by Theorem 2 in \cite{AChen15PTRF}, for $\beta\in(\frac{1}{\sqrt{2}},\frac{1}{\sqrt{2}}+\eta_1],$ $\mu_\beta$ has a jump discontinuity at $\upsilon_\beta,$ which guarantees that $m<1$.

\end{proof}

\subsection{Proof of Theorems \ref{thmFderiv} and \ref{Fm}}
In this section, we prove the three theorems stating the crucial properties of $F_\mu$ and $G_\mu.$ 
We first prove Theorem \ref{Fm} as follows:
\begin{proof}[Proof of Theorem \ref{Fm}]
Note that when $\mu=\delta_0,$ we have that  for $u \in [0,1]$, 
\begin{eqnarray*}
W_\mu(u)&=&\Phi\big( M(u) ,u\big)-\Phi(0,0)\\
&=& \log\cosh  M(u)  +\frac{1}{2}\big( \xi'(1)-\xi'(u) \big)- \frac{1}{2} \xi'(1)\\
&=& \log\cosh  M(u)  -\frac{1}{2}\xi'(u).
\end{eqnarray*}
We also have that
\begin{eqnarray*}
\Phi_\mu(x,u) &=& \log \cosh(x)+\frac{1}{2} [\xi'(1)-\xi'(u)], \\
\partial_x \Phi_\mu(x,u)
&=&\tanh(x),\\
\partial^2_x \Phi_\mu(x,u)&=& \cosh^{-2}(x).
\end{eqnarray*}
Therefore we obtain that for $u \in [0,1],$
\begin{eqnarray*}
\Gamma_{\delta_0}(x)&=&  \exp( -\beta^2 x ) \mathbb{E}[\tanh^2\big( M(x)  \cosh  M(x) \big)] \\
&=&\frac{\mathbb{E}[\tanh^2(\sqrt{2\beta^2x}g)\cosh(\sqrt{2\beta^2x}g)]}{\mathbb{E}[\cosh(\sqrt{2\beta^2x}g)]},
\end{eqnarray*}
and
\begin{eqnarray*}
\Gamma'_{\delta_0}(x)&=&2\beta^2\frac{\mathbb{E}\big[ \cosh^{-3}(\sqrt{2\beta^2x}g) \big]}{\mathbb{E}[ \cosh(\sqrt{2\beta^2x}g) ]},\\
\Gamma'_{\delta_0}(0)&=&2\beta^2,
\end{eqnarray*}
where $g$ is a standard Gaussian random variable. Here we use the relation $ \exp( \beta^2 x )= \mathbb{E}[ \cosh(\sqrt{2\beta^2x}g) ].$

Based on the ingredients above, we can then prove the strict monotonicity of $F_{\delta_0}$ by computing its derivative as follows:
\begin{eqnarray*}
\frac{d}{dx} \big\{ F_{\delta_0}(x) \big\}&=&\frac{1}{\big(\Gamma_{\delta_0}(x)\big)} \cdot \Big[ \Gamma'_{\delta_0}(0)+\Gamma'_{\delta_0}(x)+ x \cdot \frac{d}{dx}\big( \Gamma'_{\delta_0}(x) \big) \Big] \\
&&- \frac{x \cdot \big( \Gamma'_{\delta_0}(0)+\Gamma'_{\delta_0}(x) \big)}{\big(\Gamma_{\delta_0}(x)\big)^2} \cdot \frac{d}{dx} \big( \Gamma_{\delta_0}(x)\big) \\
&=& \frac{a_1(x)}{a_1(x)-a_{-1}(x)} \cdot \Big[ 2\beta^2\Big(1+\frac{a_{-3}(x)}{a_1(x)}\Big) +2\beta^4 x \cdot \frac{\big(8a_{-3}(x)-12a_{-5}(x)\big)}{a_1(x)} \Big] \\
&&+ \frac{2\beta^2x (a_1(x))^2\Big(1+\frac{a_{-3}(x)}{a_1(x)}\Big)}{\big( a_1(x)-a_{-1}(x) \big)^2} \cdot \frac{2\beta^2a_{-3}(x)}{a_1(x)} \\
&=& \frac{2 \beta^2}{\big(a_1(x)-a_{-1}(x)\big)^2} \Big\{ \big(a_1(x)+a_{-3}(x)\big) \big(a_1(x)-a_{-1}(x)\big)\\
&&-2 \beta^2x\Big[ \big(a_1(x)-a_{-1}(x)\big)\big(6a_{-5}(x)-4a_{-3}(x)\big)+ a_{-3}(x)\big( a_1(x)+a_{-3}(x) \big)\Big] \Big\} \\
&:=&  \frac{2 \beta^2}{\big(a_1(x)-a_{-1}(x)\big)^2} \cdot  f_{\delta_0}(x),
\end{eqnarray*}
where $a_n(x):= \mathbb{E}[\cosh^n(\sqrt{2\beta^2x}g)]$, for $n \in \mathbb{Z}.$

In order to prove that $F_{\delta_0}(x)$ is strictly increasing on $(0,1],$ it suffices for us to show that $f_{\delta_0}(x)>0$ for $x \in (0,1].$ We now split the terms in $f_{\delta_0}(x)$ into the following two groups:
\begin{eqnarray*}
I:=\big(a_1(x)+a_{-3}(x)\big) \big(a_1(x)-a_{-1}(x)\big) - 2\beta^2x a_{-3}(x) \cdot \big( a_1(x)+a_{-3}(x) \big),
\end{eqnarray*}
and
\begin{eqnarray*}
II:=-2 \beta^2x \cdot \big(a_1(x)-a_{-1}(x)\big)\big(6a_{-5}(x)-4a_{-3}(x)\big).
\end{eqnarray*}
We then prove that 
\begin{eqnarray} \label{group1}
I- \frac{2\beta^2x}{1+6\beta^2x} \big(a_1(x)+a_{-3}(x)\big) \big(a_1(x)-a_{-1}(x)\big) >0,
\end{eqnarray}
and
\begin{eqnarray}\label{group2}
II+ \frac{2\beta^2x}{1+6\beta^2x} \big(a_1(x)+a_{-3}(x)\big) \big(a_1(x)-a_{-1}(x)\big) >0,
\end{eqnarray}
for $x \in (0,1].$
For $x \in (0,1],$ the inequality \eqref{group1} is equivalent to 
\begin{eqnarray*}
a_1(x)-a_{-1}(x)-2\beta^2xa_{-1}(x)+2\beta^2x \cdot \big( 2a_1(x)-a_{-1}(x)-a_{-3}(x)-6\beta^2xa_{-3}(x) \big)>0,
\end{eqnarray*}
and \eqref{group2} is equivalent to
\begin{eqnarray*}
a_1(x)+5a_{-3}(x)-6a_{-5}(x)-2\beta^2x \big( 18a_{-5}(x)-12a_{-3}(x)\big)>0.
\end{eqnarray*}
Note that by an application of Gaussian integration by parts, we have the relation 
\begin{eqnarray*}
\mathbb{E}\Big[\big(\sqrt{2\beta^2x}g\big) b_n\big( \sqrt{2\beta^2x}g \big)\Big]=2\beta^2x \cdot a_n(x),
\end{eqnarray*}
where $b_n(x)$ is the antiderivative of $\cosh^n(x)$.
We then obtain the following three relations:
\begin{eqnarray*}
&(1)&a_1(x)-a_{-1}(x)-2\beta^2xa_{-1}(x) \\
&&= \mathbb{E}\Big[ \cosh\big( \sqrt{2\beta^2x}g \big)- \cosh^{-1}\big( \sqrt{2\beta^2x}g \big) - \big(\sqrt{2\beta^2x}g\big) \cdot   b_{-1}\big( \sqrt{2\beta^2x}g \big)\Big],\\
&(2)&2a_1(x)-a_{-1}(x)-a_{-3}(x)-6\beta^2xa_{-3}(x)\\
&&= \mathbb{E}\Big[ 2\cosh\big( \sqrt{2\beta^2x}g \big)- \cosh^{-1}\big( \sqrt{2\beta^2x}g \big)- \cosh^{-3}\big( \sqrt{2\beta^2x}g \big) -3 \big(\sqrt{2\beta^2x}g\big) \cdot   b_{-3}\big( \sqrt{2\beta^2x}g \big)\Big]\\
&(3)&a_1(x)+5a_{-3}(x)-6a_{-5}(x)-2\beta^2x \big( 18a_{-5}(x)-12a_{-3}(x)\big),\\
&&= \mathbb{E}\Big[ \cosh\big( \sqrt{2\beta^2x}g \big) +5 \cosh^{-3}\big( \sqrt{2\beta^2x}g \big)-6 \cosh^{-5}\big( \sqrt{2\beta^2x}g \big)\\
&&- \big(\sqrt{2\beta^2x}g\big) \cdot   \Big(18 b_{-5}\big( \sqrt{2\beta^2x}g \big)-12 b_{-3}\big( \sqrt{2\beta^2x}g \big) \Big)\Big],
\end{eqnarray*}

Now in order to prove that $F_{\delta_0}(x)$ is strictly increasing on $(0,1],$ it suffices for us to show that the following three inequalities holds for any $x \in \mathbb{R} \setminus \{0\} $:
\begin{eqnarray}\label{GIP123}
&(1)& \cosh( x )- \cosh^{-1}( x ) - x \cdot   b_{-1}( x ) > 0, \label{GIP1} \\
&(2)& 2 \cosh(x)-\cosh^{-1}(x)-\cosh^{-3}(x)-3x \cdot b_{-3}(x) >0 , \label{GIP2}\\
&(3)& \cosh(x)+5\cosh^{-3}(x)-6\cosh^{-5}(x)-6x \big(3b_{-5}(x)-2b_{-3}(x) \big) >0. \label{GIP3}
\end{eqnarray}
 We leave the proof of the three inequalities to the end of the section.
\end{proof}

Now we  prove Theorem \ref{thmFderiv}:
\begin{proof}[Proof of Theorem \ref{thmFderiv}]
Recall that 
\begin{eqnarray*}
G_\mu(s,t)=\frac{(t-s)\cdot [\Gamma'_\mu(s)+\Gamma'_\mu(t)]}{\Gamma_\mu(t)-\Gamma_\mu(s)}-2.
\end{eqnarray*}
and 
\begin{eqnarray*}
\Gamma'_\mu(u)=2 \beta^2 \cdot \mathbb{E}\big[ \big( \partial^2_{x} \Phi_\mu(M(u),u) \big)^2 \exp W_\mu(u) \big].
\end{eqnarray*}
By the definition of $\Gamma'_\mu(u),$ we have that $\Gamma'_\mu(u)>0$ for $u \in (0,1],$ which implies that $\Gamma_{\mu}(u) \neq \Gamma_{\mu}(q)$ for $u \in [0,1] \setminus \{q\}$. Since $\Gamma_\mu'(u)$ is continuous, for $t \in (q,1]$, we then have that $G_\mu(q,t)$ is continuous on $(q,1].$

By Proposition 1 in \cite{AChen15PTRF}, any function of the form \eqref{Fmu} is a continuous function and uniformly on $[0,1].$
Then by Theorem \ref{thmcal}, $\Gamma^{(k)}_\mu(u)$ is continuous with respect to $u$ on $(q,q')$ for $k \geq 0$. Now for any $\beta>0$, we compute $\lim_{t \rightarrow q^+}G_\mu(q,t)$ and $\lim_{t\rightarrow q^+}\frac{\partial}{\partial t}\{G_\mu(q,t)\}$ by L'H$\hat{o}$pital's rule as follows:
\begin{eqnarray*}
\lim_{t \rightarrow q^+}G_\mu(q,t)&=& \lim_{t \rightarrow q^+} \frac{  [\Gamma'_\mu(q)+\Gamma'_\mu(t)] +(t-q)\cdot \Gamma''_\mu(t) }{ \Gamma'_\mu(t) }-2 \\
&=& \frac{2\Gamma'_\mu(q)}{\Gamma'_\mu(q)}-2\\
&=&0,
\end{eqnarray*}
and
\begin{eqnarray*}
&&\lim_{t\rightarrow q^+}\frac{\partial}{\partial t}\{G_\mu(q,t)\}\\
&=& \lim_{t\rightarrow q^+} \frac{1}{2\Gamma'_\mu(t)[\Gamma_\mu(t)-\Gamma_\mu(q)]} \cdot \big\{ [\Gamma_\mu(t)-\Gamma_\mu(q)]\cdot [2\Gamma''_\mu(t)+\Gamma'''_\mu(t)(t-q)]\\
&&-\Gamma_\mu(t)[\Gamma'_\mu(t)+\Gamma'_\mu(q)](t-q) \big\} \\
&=& \lim_{t\rightarrow q^+} \frac{1}{2\Gamma'_\mu(t)^2+2\Gamma''_\mu(t)[\Gamma_\mu(t)-\Gamma_\mu(q)]} \cdot \Big\{ \Gamma'_\mu(t)[2\Gamma''_\mu(t)+\Gamma'''_\mu(t)(t-q)] \\
&& +[\Gamma_\mu(t)-\Gamma_\mu(q)][3\Gamma'''_\mu(t)+\Gamma^{(4)}_\mu(t)(t-q)]- \Gamma'''_\mu(t)[\Gamma'_\mu(t)+\Gamma'_\mu(q)](t-q)\\
&&-\Gamma''_\mu(t)[\Gamma'_\mu(t)+\Gamma'_\mu(q)]-\Gamma'_\mu(t)^2(t-q)\Big\} \\
&=& \lim_{t\rightarrow q^+}  \frac{1}{2\Gamma'_\mu(t)^2}\big\{2 \Gamma'_\mu(t) \Gamma''_\mu(t)-\Gamma''_\mu(t) \cdot [\Gamma'_\mu(t)+\Gamma'_\mu(q)] \big\}\\
&=&0.
\end{eqnarray*}
We then consider $\lim_{t\rightarrow q^+}\frac{\partial^2}{\partial t^2} \big\{G_{\mu_\beta}(q,t) \big\},$ for $\beta \geq \frac{1}{\sqrt{2}}.$ We compute $\frac{\partial^2}{\partial t^2} \big\{G_{\mu_\beta}(q,t) \big\}$ as follows:
\begin{eqnarray*}
&&\frac{\partial^2}{\partial t^2} \big\{ G_\mu(q,t) \big\} \\
&=&\frac{1}{[\Gamma_\mu(t)-\Gamma_\mu(q)]^3} \Big\{ [\Gamma_\mu(t)-\Gamma_\mu(q)]\big[ [\Gamma_\mu(t)-\Gamma_\mu(q)][2 \Gamma''_\mu(t)+\Gamma'''_\mu(t)(t-q)]\\
&&- \Gamma''_\mu(t)[\Gamma'_\mu(q)+3 \Gamma'_\mu(t)](t-q)\big] -2 \Gamma'_\mu(t) [\Gamma'_\mu(t)+\Gamma'_\mu(q)] [\Gamma_\mu(t)-\Gamma_\mu(q)-\Gamma'_\mu(t)(t-q)] \Big\}\\
&:=& \frac{1}{[\Gamma_\mu(t)-\Gamma_\mu(q)]^3} \cdot A.
\end{eqnarray*}
In order to  compute $\lim_{t\rightarrow q^+}\frac{\partial^2}{\partial t^2} \big\{G_{\mu_\beta}(q_1,t) \big\}$ by L'H$\hat{o}$pital's rule, we note that 
\begin{eqnarray}\label{q120}
\lim_{t \rightarrow q} [\Gamma_\mu(t)-\Gamma_\mu(q)]^3=\lim_{t \rightarrow q} \frac{\partial}{\partial t} \big\{ [\Gamma_\mu(t)-\Gamma_\mu(q)]^3\big\}=\lim_{t \rightarrow q^+} \frac{\partial^2}{\partial t^2} \big\{ [\Gamma_\mu(t)-\Gamma_\mu(q)]^3\big\}=0,
\end{eqnarray}
and
\begin{eqnarray}\label{A0}
\lim_{t \rightarrow q^+}A=\lim_{t \rightarrow q^+} \frac{\partial}{\partial t} \big\{A\big\}=\lim_{t \rightarrow q^+} \frac{\partial^2}{\partial t^2} \big\{ A\big\}=0.
\end{eqnarray}
Also, the third derivative of $[\Gamma_\mu(t)-\Gamma_\mu(q_1)]^3$ and $A$ are nonzero when $t\rightarrow q^+:$
\begin{eqnarray*}
\lim_{t \rightarrow q^+} \frac{\partial^3}{\partial t^3} \big\{ A\big\}=2 \Gamma'''_\mu(q^+) \Gamma_\mu'(q)^2,
\end{eqnarray*}
and
\begin{eqnarray*}
\lim_{t \rightarrow q^+} \frac{\partial^3}{\partial t^3} \big\{ [\Gamma_\mu(t)-\Gamma_\mu(q)]^3\big\}=6\Gamma_\mu'(q)^3.
\end{eqnarray*}
We then compute $\lim_{t\rightarrow q^+}\frac{\partial^2}{\partial t^2}\{G_\mu(q,t)\}$ by L'H$\hat{o}$pital's rule  as follows:
\begin{eqnarray*}
\lim_{t\rightarrow q^+}\frac{\partial^2}{\partial t^2}\{G_\mu(q,t)\}=\lim_{t\rightarrow q^+}\frac{\frac{\partial^3}{\partial t^3}\big\{ A \big\} }{\frac{\partial^3}{\partial t^3}\big\{ [\Gamma_\mu(t)-\Gamma_\mu(q)]^3 \big\} }=\frac{ \Gamma'''_\mu(q^+) }{3\Gamma_\mu'(q)}.
\end{eqnarray*}
To prove item (2), we approximate $\frac{ \Gamma'''_\mu(q^+) }{3\Gamma_\mu'(q)}$ by $\frac{ \Gamma'''_{\delta_0}(0^+) }{3\Gamma_{\delta_0}'(0)}$.  We first compute $\Gamma'''_\mu(q^+)$ by Theorem \ref{thmcal}. For $k=2$ and $L(y_1,y_2)=y^2_2$, we have that
\begin{eqnarray*}
\Gamma''_\mu(u)
&=&2\beta^4 \mathbb{E} \Big[ \big(  2(\partial^{3}_x \Phi_\mu )^2 -4m( \partial^2_x\Phi_\mu)^3   \big) \exp W_\mu(u)\Big],
\end{eqnarray*}
for $u \in (q,q').$
Also  it yields that for $k=3$, $L(y_1,y_2,y_3)=y^2_3$,
\begin{eqnarray*}
 \mathbb{E} \big[  (\partial^{3}_x \Phi_\mu )^2  \exp W_\mu(u)\big] &=&\beta^2 \mathbb{E} \Big[ \Big( 2 (\partial^{4}_x \Phi_\mu)^2  -12m \partial^{2}_x \Phi_\mu  ( \partial^3_x\Phi_\mu)^2  \Big) \exp W_\mu(u)\Big],
\end{eqnarray*}
and for $k=2$, $L(y_1,y_2)=y^3_2$, 
\begin{eqnarray*}
\mathbb{E} \big[  ( \partial^2_x\Phi_\mu)^3   \exp W_\mu(u)\big]&=& \beta^2 \mathbb{E} \Big[ \Big(  6 \partial^2_x\Phi_\mu (\partial^{3}_x \Phi_\mu)^2  -6m (\partial^2_x\Phi_\mu)^4   \Big) \exp W_\mu(u)\Big],
\end{eqnarray*}
for $u \in (q,q')$,
which implies that for $u \in (q,q')$,
\begin{eqnarray*}
\Gamma'''_\mu(u)=8\beta^6 \mathbb{E}\Big[ \Big( (\partial^{4}_x \Phi_\mu)^2 -12m \partial^{2}_x \Phi_\mu  ( \partial^3_x\Phi_\mu)^2 +6m^2   (\partial^2_x\Phi_\mu)^4 \Big) \exp W_\mu(u)  \Big].
\end{eqnarray*}
When $\mu=\delta_0$ and $q=0,$ recall that $W_\mu(0)=0$, $\Gamma'_\mu(0)=2\beta^2$ and $\partial^2_x \Phi_\mu(x,u)=\cosh^{-2}(x)$, for $x \in \mathbb{R}$. We then have that
\begin{eqnarray}\label{square}
\Gamma_{\delta_0}'''(0^+)&=&8\beta^6 \Big(  \big( \partial^{4}_x \Phi_\mu(0,0)\big)^2 +6m^2   \big(\partial^2_x\Phi_\mu(0,0) \big)^4 \Big) \nonumber\\
&=&8 \beta^6 \big( (-2 )^2+6m^2 \big)\nonumber\\
&=& 16 \beta^6 (2+3m^2)\nonumber\\
&>& 32\beta^6.
\end{eqnarray}
Here we use the fact in Lemma 14.7.16 \cite{TalagrandVolI} that $\partial^3_x\Phi_\mu(x,u)$ is odd with respect to $x$ for any $\mu\in M[0,1]$ and $u \in [0,1].$

Since $\lim_{\beta \rightarrow \frac{1}{\sqrt{2}}} \mu_\beta = \delta_0$ {(in the sense of Lemma \ref{continuitylem})},  by Proposition 1(ii) and Lemma 2 in \cite{AChen15PTRF},   there exists $\eta_0>0$, such that if $q \in [0,\eta_0],$ then $\text{for } \beta \in \big[ \frac{1}{\sqrt{2}}, \frac{1}{\sqrt{2}} +\eta_0],$
\begin{eqnarray*}
\big| \Gamma'_{\mu_\beta}(q) - \Gamma'_{\delta_0}(q) \big| \leq \frac16 \text{ , }
\big| \Gamma'_{\delta_0}(q) - \Gamma'_{\delta_0}(0) \big| \leq \frac16 ,
\end{eqnarray*}
and
\begin{eqnarray*}
\big| \Gamma'''_{\mu_\beta}(q^+) - \Gamma'''_{\delta_0}(q^+) \big| \leq\frac16\text{ , }
\big| \Gamma'''_{\delta_0}(q^+) - \Gamma'''_{\delta_0}(0^+) \big| \leq\frac16,
\end{eqnarray*}
which implies that 
\begin{eqnarray*}
\big| \Gamma'_{\mu_\beta}(q) - \Gamma'_{\delta_0}(0) \big| \leq \frac13 \text{ and }
\big| \Gamma'''_{\mu_\beta}(q^+) - \Gamma'''_{\delta_0}(0^+) \big| \leq \frac13 .
\end{eqnarray*}
For $\beta \in \big[\frac{1}{\sqrt{2}}, \frac{1}{\sqrt{2}}+\eta_0\big]$, we then have that if $q \in [0,\eta_0],$
\begin{eqnarray*}
\Gamma'_{\mu_\beta}(q)  \leq \Gamma'_{\delta_0}(0)  + \big| \Gamma'_{\mu_\beta}(q) - \Gamma'_{\delta_0}(0) \big| \leq 2\beta^2+\frac13,
\end{eqnarray*}
and
\begin{eqnarray*}
\Gamma'''_{\mu_\beta}(q^+)  \geq \Gamma'''_{\delta_0}(0^+)  - \big| \Gamma'''_{\mu_\beta}(q^+) - \Gamma'''_{\delta_0}(0^+) \big| > 32 \beta^6  -  \frac13 .
\end{eqnarray*}
Therefore we obtain that 
\begin{eqnarray*}
\lim_{t\rightarrow q^+}\frac{\partial^2}{\partial t^2}\big\{G_\mu(q,t)\big\}=\frac{ \Gamma'''_\mu(q^+) }{3\Gamma_\mu'(q)} \geq \frac{32\beta^6-\frac13}{3 (2\beta^2+\frac13)} > \frac12,
\end{eqnarray*}
$\text{for} $ $q \in [0,\eta_0]$ and  $\beta \in \big[\frac{1}{\sqrt{2}},\frac{1}{\sqrt{2}}+\eta_0\big]$.

Finally we show that $ \frac{\partial^3}{\partial t^3} \{G_{\mu}(q,t)\}$ is bounded for  $t \in (q,q')$. We compute $\frac{\partial^3}{\partial t^3} \big\{ G_\mu(q,t) \big\}$ explicitly as follows:
\begin{eqnarray*}
\frac{\partial^3}{\partial t^3} \big\{ G_\mu(q,t) \big\} &=& \frac{ [\Gamma_\mu(t)-\Gamma_\mu(q)]  \cdot  \frac{\partial}{\partial t} \{ A \} - 3\Gamma'_\mu(t) \cdot A}{[\Gamma_\mu(t)-\Gamma_\mu(q)]^4}.
\end{eqnarray*}
Note that 
\begin{eqnarray*}
&&\frac{\partial}{\partial t} \Big\{ [\Gamma_\mu(t)-\Gamma_\mu(q  )]    \frac{\partial}{\partial t} \{ A \} - 3\Gamma'_\mu(t)  A  \Big\}=  [\Gamma_\mu(t)-\Gamma_\mu(q  )]   \frac{\partial^2}{\partial t^2} \{ A \} -2 \Gamma'_\mu(t)   \frac{\partial}{\partial t} \{ A \} -3 \Gamma''_\mu(t)   A,\\
&&\frac{\partial^2}{\partial t^2} \Big\{ [\Gamma_\mu(t)-\Gamma_\mu(q  )]    \frac{\partial}{\partial t} \{ A \} - 3\Gamma'_\mu(t)  A  \Big\}= -  \Gamma'_\mu(t)   \frac{\partial^2}{\partial t^2} \{ A \}  -3 \Gamma'''_\mu(t)   A  -5 \Gamma''_\mu(t)   \frac{\partial}{\partial t} \{ A \}\\
&&\qquad \qquad \qquad \qquad \qquad \qquad \qquad \qquad \qquad +  [\Gamma_\mu(t)-\Gamma_\mu(q  )]   \frac{\partial^3}{\partial t^3} \{ A \} ,\\
&&\frac{\partial^3}{\partial t^3} \Big\{ [\Gamma_\mu(t)-\Gamma_\mu(q  )]    \frac{\partial}{\partial t} \{ A \} - 3\Gamma'_\mu(t)  A  \Big\}= -  6\Gamma''_\mu(t)   \frac{\partial^2}{\partial t^2} \{ A \}  -8 \Gamma'''_\mu(t)   \frac{\partial}{\partial t} \{ A \}  -3 \Gamma^{(4)}_\mu(t)   A \\
&&\qquad \qquad \qquad \qquad \qquad \qquad \qquad \qquad \qquad  +  [\Gamma_\mu(t)-\Gamma_\mu(q  )]   \frac{\partial^4}{\partial t^4} \{ A \}  , \\
&&\frac{\partial^4}{\partial t^4} \Big\{ [\Gamma_\mu(t)-\Gamma_\mu(q  )]    \frac{\partial}{\partial t} \{ A \} - 3\Gamma'_\mu(t)  A  \Big\}= -  14\Gamma'''_\mu(t)   \frac{\partial^2}{\partial t^2} \{ A \}  -11 \Gamma^{(4)}_\mu(t)   \frac{\partial}{\partial t} \{ A \}  -3 \Gamma^{(5)}_\mu(t)   A \\
&& \qquad \qquad \qquad \qquad \qquad \qquad \qquad +  [\Gamma_\mu(t)-\Gamma_\mu(q  )]   \frac{\partial^5}{\partial t^5} \{ A \} -  6\Gamma''_\mu(t)   \frac{\partial^3}{\partial t^3} \{ A \}    + \Gamma'_\mu(t) \frac{\partial^4}{\partial t^4} \{ A \}  .
\end{eqnarray*}
When $t \rightarrow q^+$, by \eqref{A0},
we have that 
\begin{eqnarray*}
\lim_{t \rightarrow q^+}\frac{\partial^k}{\partial t^k} \Big\{ [\Gamma_\mu(t)-\Gamma_\mu(q)]    \frac{\partial}{\partial t} \{ A \} - 3\Gamma'_\mu(t)  A  \Big\}=0, 
\end{eqnarray*}
and
\begin{eqnarray*}
\lim_{t \rightarrow q^+}\frac{\partial^k}{\partial t^k} \Big\{ [\Gamma_\mu(t)-\Gamma_\mu(q)]^4  \Big\}=0,
\end{eqnarray*}
$\text{for } k=0,1,2,3.$
Then by L'H$\hat{o}$pital's rule, 
\begin{eqnarray*}
\lim_{t \rightarrow q  ^+}\frac{\partial^3}{\partial t^3} \big\{ G_\mu(q  ,t) \big\} &=&  \lim_{t \rightarrow q  ^+}\frac{ \frac{\partial^4}{\partial t^4 } \big\{   [\Gamma_\mu(t)-\Gamma_\mu(q  )]  \cdot  \frac{\partial}{\partial t} \{ A \} - 3\Gamma'_\mu(t) \cdot A \big\} }{  \frac{\partial^4}{\partial t^4 } \big\{ [\Gamma_\mu(t)-\Gamma_\mu(q  )]^4 \big\}}\\
&=& \frac{-  12 \Gamma_\mu'(q  )^2  \Gamma''_\mu(q^+  )   \Gamma'''_\mu(q^+  )  + \Gamma'_\mu(q^+  ) \frac{\partial^4}{\partial t^4} \{ A \}_{|t=q^+  } }{24 \Gamma'_\mu(q  )^4}. 
\end{eqnarray*}
By Proposition 1 in \cite{AChen15PTRF} and Theorem \ref{thmcal}, for $\beta \in [\frac{1}{\sqrt{2}},100],$ there exists a constant  $M>0$ such that  $\big|\Gamma_\mu'(q)\big|$, $\big|\Gamma_\mu''(q^+)\big|$, $\big|\Gamma_\mu'''(q^+)\big|$ and $\big|\frac{\partial^4}{\partial t^4} \{ A \}_{|t=q^+}\big|$ are all bounded by $D_1,$ which implies that 
\begin{eqnarray*}
\Big| \lim_{t \rightarrow q^+}\frac{\partial^3}{\partial t^3} \big\{ G_\mu(q,t) \big\} \Big| <D_2,
\end{eqnarray*}
for some $D_2>0.$ 
Since $\Gamma_{\mu}(u) \neq \Gamma_{\mu}(q)$ for $u \in [0,1] \setminus \{q\}$ and any function of the form \eqref{Fmu} is a continuous function and uniformly on $[0,1],$ 
then by Theorem \ref{thmcal}, $\frac{\partial^3}{\partial t^3} \big\{ G_\mu(q,t) \big\} $ is  bounded for $t \in (q,q')$.

\end{proof}

Finally, we prove the three inequalities \eqref{GIP1}, \eqref{GIP2} and \eqref{GIP3} in the proof of Theorem \ref{Fm}.
\begin{lemma}\label{lemmaGIP}
The three inequalities \eqref{GIP1}, \eqref{GIP2} and \eqref{GIP3} hold for any $x\in \mathbb{R}\setminus \{0\} $.
\end{lemma}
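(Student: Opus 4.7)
The plan begins by noting that each of \eqref{GIP1}--\eqref{GIP3} is an even function of $x$: since $\cosh^{n}$ is even, its antiderivative $b_{n}$, normalized so that $b_{n}(0)=0$, is odd, and hence $x\,b_{n}(x)$ is even. It therefore suffices to prove the three inequalities for $x>0$. Next, applying the standard reduction formula
\[
\int\cosh^{-n}(t)\,dt=\frac{\tanh(t)\cdot\cosh^{-(n-2)}(t)}{n-1}+\frac{n-2}{n-1}\int\cosh^{-(n-2)}(t)\,dt,
\]
I will obtain $b_{-1}(x)=\arctan(\sinh x)$, $b_{-3}(x)=\tfrac{1}{2}\bigl(\tanh(x)\cosh^{-1}(x)+\arctan(\sinh x)\bigr)$, and $b_{-5}(x)=\tfrac{1}{4}\tanh(x)\cosh^{-3}(x)+\tfrac{3}{8}\bigl(\tanh(x)\cosh^{-1}(x)+\arctan(\sinh x)\bigr)$. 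Substituting these into \eqref{GIP1}--\eqref{GIP3} and simplifying using the identities $\cosh^{2}x-1=\sinh^{2}x$, $\cosh(x)-\cosh^{-1}(x)=\sinh(x)\tanh(x)$, $2\cosh^{4}x-\cosh^{2}x-1=(2\cosh^{2}x+1)\sinh^{2}x$ and $\cosh^{6}x+5\cosh^{2}x-6=\sinh^{2}x\,(\cosh^{4}x+\cosh^{2}x+6)$ (used for \eqref{GIP2} and \eqref{GIP3} respectively), each inequality collapses to one whose only transcendental content is the single term $\arctan(\sinh x)$ multiplied by explicit polynomials in $\cosh x$ and $\sinh x$.

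The second step is the substitution $u=\sinh x$, under which $\cosh x=\sqrt{1+u^{2}}$, the factor $x$ becomes $\psi(u):=\log\bigl(u+\sqrt{1+u^{2}}\bigr)$ (the inverse hyperbolic sine), and $\arctan(\sinh x)=\arctan u$. After clearing denominators, each of the three inequalities reduces to a one-variable inequality in $u>0$ whose left-hand side is a polynomial in $u$ and $\sqrt{1+u^{2}}$ and whose right-hand side is a linear combination of the two transcendental quantities $u\,\psi(u)$ and $\psi(u)\,\arctan(u)$. In particular, \eqref{GIP1} reduces to
\[
\psi(u)\,\arctan(u)<\frac{u^{2}}{\sqrt{1+u^{2}}},\qquad u>0,
\]
which is exactly the sharp inverse-hyperbolic product inequality recorded in \cite{arcsinh}; this closes the first case immediately.

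The main obstacle is the remaining two inequalities \eqref{GIP2} and \eqref{GIP3}. A Maclaurin expansion at $u=0$ shows that the differences of the two sides vanish to high order (for \eqref{GIP1} the leading term is $\tfrac{2x^{6}}{45}$; numerics indicate that \eqref{GIP2} and \eqref{GIP3} are equally tight), so crude bounds such as $\psi(u)<u$ or $\arctan(u)<u$ cannot possibly suffice. My plan is to combine the sharp product bound above with a companion Cusa--Huygens--Mitrinovi\'{c}-type bound for $\psi$ alone of the form $\psi(u)<\tfrac{u(u^{2}+3)}{3\sqrt{1+u^{2}}}$, also available from \cite{arcsinh}; each sharp estimate will be used to dominate one of the two transcendental terms on the right-hand side, reducing the residue to a purely algebraic inequality in $c:=\sqrt{1+u^{2}}\ge 1$ which I expect to verify by expanding as a polynomial in $c$ and checking positivity of coefficients. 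Should the combined estimate be insufficient in a small neighbourhood of $u=0$ (precisely where the inequality is tightest), I will supplement it with a Taylor-derivative argument: define $H(u):=\mathrm{LHS}-\mathrm{RHS}$, verify $H^{(k)}(0)=0$ through the observed order of vanishing using the Maclaurin series of $\psi$ and $\arctan$, and show that the first nonzero derivative at $0$ is strictly positive, so that $H>0$ on some neighbourhood of $0$ while the sharp-bound argument handles the complement.
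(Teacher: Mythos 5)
Your plan takes a genuinely different route from the paper's. The paper proves each of $g_1,g_2$ is strictly increasing by differentiating, then applying the single bound $\arctan z \le z(1+z^2)^{-1/3}$ from \cite{arcsinh} and substituting $t=\cosh x$ to reach an algebraic inequality verified by factoring out $(t^{1/3}-1)^2$; for \eqref{GIP3} the paper similarly reduces to a function $g_3(t)$ and then uses Sturm's theorem plus numerical evaluation at critical points. Your substitution $u=\sinh x$, $c=\sqrt{1+u^2}$ is clean and correct. For \eqref{GIP1} your reduction to $\psi(u)\arctan(u)<u^2/\sqrt{1+u^2}$ is right, and for \eqref{GIP2} the two bounds you propose in fact cancel \emph{exactly}: with $\psi\arctan u<\tfrac{u^2}{c}$ and $\psi<\tfrac{u(u^2+3)}{3c}$, a short computation gives
\[
\tfrac{3}{2}\psi\arctan u+\tfrac{3u\psi}{2c^2}<\tfrac{3(c^2-1)}{2c}+\tfrac{(c^2-1)(c^2+2)}{2c^3}=\tfrac{(c^2-1)(2c^2+1)}{c^3}=2c-c^{-1}-c^{-3},
\]
so the residue is literally zero and strictness comes from the strictness of the two auxiliary bounds. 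This is an elegant proof of \eqref{GIP2}, arguably tighter than the paper's; and the two auxiliary bounds, even if not literally in \cite{arcsinh}, are easy to prove directly (for instance $\psi(u)<u(1+u^2)^{-1/6}$ follows from $(w-1)^2(2w+1)\ge 0$ with $w=(1+u^2)^{1/3}$, and multiplying by the cited $\arctan$ bound gives the product inequality).

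However, there is a genuine gap in your treatment of \eqref{GIP3}. Carrying out your plan, \eqref{GIP3} becomes
\[
\frac{(c^2-1)(c^4+c^2+6)}{c^5}>\tfrac{3}{4}\psi\arctan u+\frac{3u\psi(6+c^2)}{4c^4},
\]
and feeding in the same two sharp bounds yields an upper bound of $\frac{(c^2-1)(c^4+2c^2+3)}{c^5}$ for the right side. So the ``residue'' to verify is $c^4+2c^2+3\le c^4+c^2+6$, i.e. $c^2\le 3$, which is false for $c>\sqrt 3$ (equivalently $|x|>\operatorname{arcsinh}\sqrt2\approx 1.15$). The bounds you use are tightest near $u=0$ and become very loose for large $u$, so your method only handles a bounded interval. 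Your proposed fallback --- a Taylor expansion at $u=0$ --- is aimed in the wrong direction: near $u=0$ your estimates already work; the failure is for moderate-to-large $u$, which a Taylor argument at the origin cannot reach. You would need an additional argument for $c\ge\sqrt3$ (for example a cruder but globally valid bound such as $\arctan u<\pi/2$ and $\psi(u)<\log(2\cosh x)<c$, or the paper's Sturm-theorem count of critical points of $g_3$). Until that piece is supplied, \eqref{GIP3}, and hence the lemma, is not fully proven.
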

\begin{proof}[Proof of Lemma \ref{lemmaGIP}]
Note that we can write $b_n(x)$ for $n=-1,-3,-5$ explicitly as follows:
\begin{eqnarray*}
&&b_{-1}(x)=\arctan(\sinh(x)), \\
&&b_{-3}(x)=\frac{1}{2}\big[ \arctan(\sinh(x))+\sinh(x) \cosh^{-2}(x) \big],\\
&&b_{-5}(x)=\frac{1}{8}\big[ 3\arctan(\sinh(x))+2\sinh(x) \cosh^{-4}(x)+3\sinh(x)\cosh^{-2}(x)\big].
\end{eqnarray*}
Since the three functions in \eqref{GIP1}, \eqref{GIP2}, \eqref{GIP3} are all even functions, we will only prove the three inequalities for $x >0.$

We first prove \eqref{GIP1} by showing that 
\begin{eqnarray}\label{g1x}
g_1(x):=\cosh( x )- \cosh^{-1}( x ) - x \cdot   b_{-1}( x ) 
\end{eqnarray}
is strictly increasing for $x>0.$ We compute the derivative of $g_1(x)$ as follows:
\begin{eqnarray*}
&&\frac{d}{dx}\big\{ g_1(x)  \big\}=\sinh(x) \cdot \bigg[ 1+\frac{1}{\cosh^2(x)}-\frac{x }{\sinh(x) \cosh(x)}-\frac{\arctan(\sinh(x))}{\sinh(x)} \bigg].
\end{eqnarray*}
Based on the inequality $\frac{\arctan(z)}{z} \leq \frac{1}{(1+z^2)^{1/3}}$ for $z \in \mathbb{R}$ in \cite{arcsinh}, we set $z= \sinh x$. We then obtain that, for $x >0,$
\begin{eqnarray*}
\frac{d}{dx}\big\{ g_1(x)  \big\}\geq \sinh(x) \cdot \big(1+\frac{1}{\cosh^2(x)}-\frac{x }{\sinh(x) \cosh(x)}-\frac{1}{\cosh^{2/3}(x)}\big).
\end{eqnarray*}
Now let $t=\cosh(x)$,  in order to show \eqref{GIP1}, it then suffices for us to show that for $t>1,$
\begin{eqnarray}\label{eqgip1}
1+\frac{1}{t^2}-\frac{\text{ arccosh}(t)}{t\sqrt{t^2-1}}-t^{-2/3}>0.
\end{eqnarray}
We reformulate \eqref{eqgip1} as follows:
\begin{eqnarray}\label{ieqgip1}
\big(t+\frac{1}{t}-t^{1/3}\big)\sqrt{t^2-1} - \text{arccosh}(t)>0, \text{ for }t>1.
\end{eqnarray}
In order to prove \eqref{ieqgip1}, we show that the left hand side of \eqref{ieqgip1} is strictly increasing by computing its derivative as follows:
\begin{eqnarray*}
&&\frac{d}{dt}\big\{ \big(t+\frac{1}{t}-t^{1/3}\big)\sqrt{t^2-1} - \text{arccosh}(t) \big\}\\&=&\frac{6t^4-4t^{10/3}-6t^2+t^{4/3}+3}{3t^2\sqrt{t^2-1}}\\
&=& \frac{(t^{1/3}-1)^2}{3t^2\sqrt{t^2-1}}\cdot \big( 6t^{10/3}+12t^3+14t^{8/3}+16t^{7/3}+18t^2
+20t^{5/3}+16t^{4/3}+12t+9t^{2/3}+6t^{1/3}+3 \big)\\
&>&0, \text{ for }t>1,
\end{eqnarray*}
which finish our proof for \eqref{GIP1}.

We then prove \eqref{GIP2} by showing that 
\begin{eqnarray}\label{g2x}
g_2(x):=2\cosh( x )- \cosh^{-1}( x )- \cosh^{-3}( x ) -3 x \cdot   b_{-3}( x ) 
\end{eqnarray}
is strictly increasing for $x>0.$ We compute the derivative of $g_2(x)$ as follows:
\begin{eqnarray*}
\frac{d}{dx}\big\{ g_2(x)  \big\}&=&\sinh(x) \cdot \bigg[ 2-\frac{3\arctan(\sinh(x))}{2 \sinh(x)}-\frac{1}{2(\cosh(x))^2}+ \frac{3}{ \cosh^4(x)} -\frac{3x}{\sinh(x)\cosh^3(x)} \bigg].
\end{eqnarray*}
Similarly, it suffices for us to show that
\begin{eqnarray}\label{ieqgip2}
\Big(-\frac{3}{2}t^{7/3}+2t^3-\frac{1}{2}t+3t^{-1}\Big)\sqrt{t^2-1} - 3 \text{arccosh}(t)>0, \text{ for }t>1.
\end{eqnarray}
In order to prove \eqref{ieqgip2}, we show that the left hand side of \eqref{ieqgip2} is strictly increasing as follows:
\begin{eqnarray*}
&&\frac{d}{dt}\Big\{ \Big(-\frac{3}{2}t^{7/3}+2t^3-\frac{1}{2}t+3t^{-1}\Big)\sqrt{t^2-1} - 3 \text{arccosh}(t) \Big\}\\&=&\frac{16t^6-10t^{16/3}-14t^4+7t^{10/3}-5t^2+6}{2t^2\sqrt{t^2-1}}\\
&=& \frac{(t^{1/3}-1)^2}{2t^2\sqrt{t^2-1}}\cdot \big( 16t^{16/3}+32t^5+38t^{14/3}+44t^{13/3}+50t^4+56t^{11/3}+48t^{10/3}\\
&&+40t^3+39t^{8/3}+38t^{7/3}+37t^2
+36t^{5/3}+30t^{4/3}+24t+18t^{2/3}+12t^{1/3}+6 \big)\\
&>&0,
\end{eqnarray*}
which finish our proof for \eqref{GIP2}.

Finally we turn to \eqref{GIP3}. By a similar reasoning, it suffices for us to show that
\begin{eqnarray*}
g_3(t):=\frac{4}{3}\cdot\frac{t+5t^{-3}-6t^{-5}}{\sqrt{t^2-1}(t^{-2/3}+6t^{-4}+t^{-2})}-\text{arccosh}(t) >0, \text{ for } t>1.
\end{eqnarray*}
We compute the derivative of $g_3(t)$ as follows:
\begin{eqnarray*}
&&\frac{d}{dt}\{g_3(t)\} = \frac{1}{9t^2 \sqrt{t^2-1}(t^{10/3}+t^2+6)^2} \cdot g_4(t),
\end{eqnarray*}
where $g_4(t):= 8t^{28/3}-9t^{26/3}+24t^8-30t^{22/3}+267t^6-320t^{16/3}-312t^4+312t^{10/3}-180t^2+432.$
By a standard application of Sturm's theorem, it yields that  $g_4(t^3)$ has exactly 2 roots in $(1+\infty).$ Therefore $g_4(t)$ also has exactly 2 roots in $(1+\infty)$ and $g_3(t)$ then has exactly 2 critical points in $(1,+\infty).$ 

Since $g_4(t)>0,$ for $t=1.25$ and $g_4(t)<0$, for $t=1.26$, then $g_3(t)$ has a local maxima point in $(1.25,1.26).$ Since $g_4(t)<0,$ for $t=1.5$ and $g_4(t)>0$, for $t=1.51$, then $g_3(t)$ has the other critical point in $(1.5,1.51),$ which is a local minima point. We denote this unique local minima point of $g_3(t)$ in $(1,+\infty)$ by $t_m.$ 
Then it holds that
$$g_3(t_m) \geq \frac{4}{3}\cdot\frac{1.25+5\cdot1.25^{-3}-6\cdot1.25^{-5}}{\sqrt{1.25^2-1}(1.25^{-2/3}+6\cdot 1.25^{-4}+1.25^{-2})} - \text{arccosh} (1.26) >0.$$
Here we use the fact that both $\text{arccosh}(t)$  and $\frac{4}{3}\cdot\frac{t+5t^{-3}-6t^{-5}}{\sqrt{t^2-1}(t^{-2/3}+6t^{-4}+t^{-2})}$ are increasing for $t \in(1,+\infty).$ Indeed, the derivative of $\frac{t+5t^{-3}-6t^{-5}}{\sqrt{t^2-1}(t^{-2/3}+6t^{-4}+t^{-2})}$ is strictly positive for $t>1$:
\begin{eqnarray*}
\frac{d}{dt} \Big\{ \frac{t+5t^{-3}-6t^{-5}}{\sqrt{t^2-1}(t^{-2/3}+6t^{-4}+t^{-2})} \Big\} = \frac{1}{3t^2\sqrt{t^2-1}(t^{10/3}+t^2+6)^2}\cdot g_5(t) 
\end{eqnarray*}
where $g_5(t):=2t^{28/3}+6t^8-3t^{22/3}+69t^6-53t^{16/3}-51t^4+78t^{10/3}+36t^2+108.$ By a standard application of Sturm's theorem, we can find that  $g_5(t^{2/3})$ has no roots in $[0,+\infty)$, which verifies our claim. 
Since $\lim_{t \rightarrow 1^+}g_3(t)=0$ and $\lim_{t \rightarrow +\infty}g_3(t)=+\infty,$ we then conclude that $g_3(t) > 0$ for $t \in (1,+\infty).$

\end{proof}


\end{document}